\documentclass[12pt,reqno]{amsart}
\usepackage[left=3cm,top=2cm,right=3cm,bottom=2cm]{geometry}
\usepackage{amssymb}
\usepackage{amsbsy}
\usepackage{epsfig}
\usepackage{tikz}
\usepackage{verbatim}
\usepackage[normalem]{ulem}
\usepackage[pdftex]{hyperref}
\hypersetup{colorlinks=true,linkcolor=blue,citecolor=blue,breaklinks = true}

\tikzstyle{vertex} = [fill,shape=circle,node distance=80pt]
\tikzstyle{edge} = [fill,opacity=.5,fill opacity=.5,line cap=round, line join=round, line width=40pt]
\tikzstyle{elabel} =  [fill,shape=circle,node distance=30pt]

\pgfdeclarelayer{background}
\pgfsetlayers{background,main}

\begin{document}
\title{Principal Eigenvector of the Signless Laplacian Matrix}
 	
\author[K. Cardoso]{Kau\^e Cardoso} \address{Instituto Federal do Rio Grande do Sul - Campus Feliz, CEP 95770-000, Feliz, RS, Brazil}
\email{\tt kaue.cardoso@feliz.ifrs.edu.br}
 	


\pdfpagewidth 8.5 in \pdfpageheight 11 in

\newcommand{\h}{\mathcal{H}}
\newcommand{\g}{\mathcal{G}}
\newcommand{\A}{\mathbf{A}}
\newcommand{\B}{\mathbf{B}}
\newcommand{\C}{\mathbf{C}}
\newcommand{\D}{\mathbf{D}}
\newcommand{\M}{\mathbf{M}}
\newcommand{\N}{\mathbf{N}}
\newcommand{\lin}{\mathcal{L}}
\newcommand{\cli}{\mathcal{C}}
\newcommand{\Q}{\mathbf{Q}}
\newcommand{\x}{\mathbf{x}}
\newcommand{\y}{\mathbf{y}}
\newcommand{\z}{\mathbf{z}}
\newcommand{\Ah}{\mathbf{A}(\mathcal{H})}

\theoremstyle{plain}
\newtheorem{Teo}{Theorem}[section]
\newtheorem{Lem}[Teo]{Lemma}
\newtheorem{Pro}[Teo]{Proposition}
\newtheorem{Cor}[Teo]{Corollary}

\theoremstyle{definition}
\newtheorem{Def}[Teo]{Definition}
\newtheorem{Afi}[Teo]{Claim}
\newtheorem{Que}[Teo]{Question}
\newtheorem{Exe}[Teo]{Example}
\newtheorem{Obs}[Teo]{Remark}

\newcommand{\keyword}[1]{\textsf{#1}}

\begin{abstract}
In this paper, we study the entries of the principal eigenvector of the signless Laplacian matrix of a hypergraph. More precisely, we obtain bounds for this entries. These bounds are computed trough other important parameters, such as spectral radius, maximum and minimum degree. We also introduce and study a new parameter related to edges of the hypergraph. This parameter is a spectral measure of a structural characteristic that can be thought of as an edge-variant of regularity. \newline

\noindent \textsc{Keywords.} Hypergraph; Signless Laplacian matrix; Principal eigenvector; Spectral radius.\newline

\noindent \textsc{AMS classification.} 05C65; 05C50; 15A18.
\end{abstract}

\maketitle

\section{Introduction}

Spectral graph theory analyzes the structure of graphs through the eigenvalues and eigenvectors
of matrices associated with them. Researchers, motivated by the success of this theory, have studied many
hypergraph matrices aiming to develop a spectral hypergraph theory. See for
example \cite{Feng, Reff2014, Rodriguez1}. In 2012, Cooper and Dutle \cite{Cooper} proposed the study of hypergraphs through tensors, and this new approach has been widely accepted by researchers of this area. However, to  obtain  eigenvalues of tensors has a high computational cost \cite{NP-hard}. In this regard, we see  that  the study of hypergraphs via matrices still has its place. Indeed, it worth pointing out that more recently, some authors have renewed the interest to study matrix representations of hypergraphs, as in \cite{Banerjee-matriz,Reff2019,kr-regular, matrix-pec-radius-2017,distance}.

The study of eigenvectors of a matrix has many applications. In particular, the principal eigenvectors of irreducible matrices form the basis of the PageRank algorithm used by Google \cite{google}. Moreover, the study of the principal eigenvector for graphs is interesting because the value of each of its entry may be seen as a spectral measure of the centrality of the vertex associated with this entry. Another interesting property of this vector is that the quotient and the subtraction of its two extreme entries can be understood as measurements of the irregularity (i.e., a measure of how close or far the hypergraph is to being regular).

In 2000, Papendieck and Recht \cite{vector1} obtained an upper bound for the maximal entry of the principal eigenvector of the adjacency matrix of a graph. In 2005, Zhang \cite{XZhang} investigate the ratio of any two extreme entries of the principal eigenvector of a graph. In 2007, Cioab\u{a} and Gregory \cite{Cioaba} improved some results of \cite{vector1} and \cite{XZhang}. In 2009, Das \cite{Das} obtained an upper bound for the maximal entry of the principal eigenvector of the signless Laplacian matrix of a graph and more recently Cardoso and Trevisan \cite{Kaue-vetor}, studied the principal eigenvector of the adjacency tensor of a hypergraph. Here we will study the principal eigenvector of the signless Laplacian matrix of a hypergraph. More specifically, we are interested in studying the extreme entries (i.e., the largest and the smallest entries) of this vector. Considering that there are few studies on this vector, some results that we prove in this paper are new even in the context of graphs.

Let $\h$ be a hypergraph whose incidence matrix is $\mathbf{B}(\h)$. The \emph{signless Laplacian matrix} of $\h$ is defined as $\mathbf{Q}(\h) = \mathbf{B}\mathbf{B}^T$. This matrix has already been studied in \cite{Kaue-energia,Kaue-lap}. The main goal of this paper is the study the principal eigenvector of this matrix. Theorems such as Perron-Frobenius and Rayleigh Principle can be inherited directly from matrix theory. In this way, we have the basic tools to study the spectral radius and the principal eigenvector of this matrix. 

In the last section of this paper, we call the attention to various hypergraph parameters which are yet to be explored. When $\h$ is a $k$-uniform hypergraph we define, for each edge $e=\{v_1,\ldots,v_k\}$, the number $\x(e) = x_{v_1}+\cdots+x_{v_k}$, where $\x=(x_v)$ is the principal eigenvector of $\Q(\h)$. Thus, we define  $\x(\max)$ and $\x(\min)$ as the largest and the smallest value reached by $\x(e)$ and $\Gamma(\h)$ as the quotient between $\x(\max)$ and $\x(\min)$. Remarkably, this last parameter provides the following characterization of a hypergraph which can be thought of as an edge-variant of regularity.

\vspace{0.1cm}\noindent\textbf{Theorem \ref{teo:caracG}.}\textit{	Let $\h$ be a uniform hypergraph. $\Gamma(\h) = 1$ if, and only if, for each edge the sum of the degrees of its vertices is constant.}\vspace{0.1cm}

We observe that if a hypergraph $\h$ has the parameter $\Gamma(\h)$ greater than 1, then the sum of the vertices in each edge is not constant, so we can say that $\Gamma(\h)$ is a measure of the distribution of degrees of vertices along the edges of the hypergraph.

The remaining of the paper is organized as follows. In Section \ref{sec:pre},
we present some basic definitions about hypergraphs and matrices, and prove some technical lemmas. In Section \ref{sec:bounds}, we study the entries of the principal eigenvector of $\Q$, obtaining bounds for them. In Section \ref{sec:edges}, we will prove Theorem \ref{teo:caracG} and obtain some results about the new parameters defined in this section.

\section{Preliminaries}\label{sec:pre}
In this section, we shall present some basic definitions about hypergraphs and matrices, as well as terminology, notation and concepts that will be useful in our proofs.
\vspace{0cm}

A \textit{hypergraph} $\h=(V,E)$ is a pair composed by a set of vertices $V(\h)$ and a set of (hyper)edges $E(\h)\subseteq 2^V$, where $2^V$ is the power set of $V$. $\h$ is said to be a $k$-\textit{uniform} (or a $k$-graph) for $k \geq 2$, if all edges have cardinality $k$. Let $\mathcal{H}=(V,E)$ and $\mathcal{H}'=(V',E')$ be hypergraphs, if $V'\subseteq V$ and $E'\subseteq E$, then $\mathcal{H}'$ is a \textit{subgraph} of $\h$.

The \textit{open neighborhood} of a vertex $v\in V(\h)$, denoted by $N(v)$, is the set formed by all vertices, distinct from $v$, that have some edge in common with $v$. The \textit{neighborhood} of $v\in V(\h)$ is defined as $N[v]$ = $N(v)\cup \{v\}$. The \textit{edge neighborhood} of a vertex $v\in V$, denoted by $E_{[v]}$, is the set of all edges that contain $v$. 

The \textit{degree} of a vertex $v\in V$, denoted by $d(v)$, is the number of
edges that contain $v$. More precisely, $\;d(v) = |E_{[v]}|$. A hypergraph is
$r$-\textit{regular} if $d(v) = r$ for all $v \in V$. We define the
\textit{maximum}, \textit{minimum} and \textit{average} degrees,
respectively, as \[\Delta(\h) = \max_{v \in V}\{d(v)\}, \quad \delta(\h) =
\min_{v \in V}\{d(v)\}, \quad d(\h) = \frac{1}{n}\sum_{v \in V}d(v).\]

Let $\h$ be a hypergraph. A \textit{walk} of length $l$ is a sequence of
vertices and edges $v_0e_1v_1e_2 \ldots e_lv_l$ where $v_{i-1}$ and $v_i$ are
distinct vertices contained in $e_i$ for each $ i=1,\ldots,l$. The
\textit{distance} between two vertices is the length of the shortest walk
connecting these two vertices. The \textit{diameter} of the hypergraph is the
largest distance between two of its vertices. The hypergraph is
\textit{connected}, if for each pair of vertices $ u, w$ there is a walk
$v_0e_1v_1e_2 \cdots e_lv_l $ where $ u = v_0 $ and $ w = v_l $. Otherwise,
the hypergraph is \textit{disconnected}.

A \textit{multigraph} is an ordered pair $\g = \left(V, E\right)$, where $V$
is a set of vertices and $E$ is a multi-set of pairs of distinct, unordered
vertices, called edges. Its \textit{adjacency matrix} $\A(\g)$, is the square
matrix of order $|V|$, where $a_{ii}=0$ and if $i \neq j$, then $a_{ij}$ is
the number of edges connecting the vertices $ i $ and $ j $.

For a hypergraph $\h$, its \textit{clique multigraph} $\cli(\h)$, has the same vertices as $\h$. The number of edges between two vertices of this multigraph is equal the number of hyperedges containing them in $\h$. For more details see \cite{Kaue-lap}.

\begin{Exe}
	The hypergraph $\h=(\{1,\ldots,5\}, \;\{123,145,345\})$ and its clique multigraph $\cli(\h)=(\{1,\ldots,5\}, \;\{12, 13, 14, 15, 32, 34, 35, 45, 45\})$ are illustrate in Figure \ref{fig:multigrafos}.
	\begin{figure}[h!]	
		\centering

		\begin{tikzpicture}
		\node[draw,circle,fill=black,label=below:,label=above:\(1\)] (v1) at (0,0) {};
		\node[draw,circle,fill=black,label=below:,label=above:\(2\)] (v2) at (1.5,2) {};
		\node[draw,circle,fill=black,label=below:,label=above:\(3\)] (v3) at (3,0) {};
		\node[draw,circle,fill=black,label=below:,label=above:\(4\)] (v4) at (1,0) {};
		\node[draw,circle,fill=black,label=below:,label=above:\(5\)] (v5) at (2,0) {};
		
		\begin{pgfonlayer}{background}
		\draw[edge,color=gray,line width=20pt] (v1) -- (v2) -- (v3);
		\draw[edge,color=gray,line width=25pt] (v5) -- (v1) -- (v4) -- (v5);
		\draw[edge,color=gray,line width=30pt] (v4) -- (v3) -- (v5) -- (v4);		
		\end{pgfonlayer}				
		\end{tikzpicture}
		\begin{tikzpicture}	
		\node[draw,circle,fill=black,label=below:,label=above:\(1\)] (v1) at (0,0) {};
		\node[draw,circle,fill=black,label=below:,label=above:\(2\)] (v2) at (1.5,2) {};
		\node[draw,circle,fill=black,label=below:,label=above:\(3\)] (v3) at (3,0) {};
		\node[draw,circle,fill=black,label=below:,label=above:\(4\)] (v4) at (1.5,0) {};
		\node[draw,circle,fill=black,label=below:,label=above:\(5\)] (v5) at (1.5,1) {};
		
		\path
		(v1) edge node[below]{} (v2)
		(v2) edge node[below]{} (v3)
		(v3) edge [bend left] node[below]{} (v1)
		(v1) edge node[below]{} (v4)
		(v1) edge node[below]{} (v5)
		(v4) edge [bend left] node[below]{} (v5)
		(v4) edge [bend right] node[below]{} (v5)
		(v4) edge node[below]{} (v3)
		(v3) edge node[below]{} (v5);
		\end{tikzpicture}
	
		\caption{~The hypergraph $\h$ and its clique multigraph $\cli(\h)$.}\label{fig:multigrafos}	
	\end{figure}
\end{Exe}

Let $\M$ be a positive irreducible square matrix, its \textit{spectral radius} $\rho(\M)$ is its largest eigenvalue and its \textit{principal eigenvector} $\x$ is the positive normalized eigenvector from Perron-Frobenius Theorem. We call the pair $(\rho,\x)$, as \textit{principal eigenpair}. Let $\h = (V, E)$ be a hypergraph. The \textit{incidence matrix} $\B(\h)$ is defined as the matrix of order $|V|\times|E|$, where $\;b(v,e) = 1$ if  $v \in e$ and $\;b(v,e) = 0$ otherwise. Its \textit{matrix of degrees} $\D(\h)$ is a square matrix of order $|V|$, where $d_{ii} = d(i)$, and $d_{ij} = 0$ if $i \neq j$. Recall that the \textit{signless Laplacian matrix} is defined as $\Q = \B\B^T$.  We say that the spectral radius, the principal eigenvector and the principal eigenpair of $\Q$, are the \textit{spectral radius}, the \textit{signless Laplacian vector} and the \textit{signless Laplacian eigenpair} of $\h$, respectively.

\begin{Lem}[\cite{Kaue-lap}]\label{teo:multigrafo}	
	Let $\h$ be a $k$-graph, $\B$  its incidence matrix, $\D$ its matrix of degrees and $\A_\cli$ the adjacency matrix of its clique multigraph. So, we have $\;\B\B^T =\D+\A_\cli.$
\end{Lem}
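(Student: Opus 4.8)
The plan is to prove the identity by a direct entry-wise comparison, since $\B\B^T$ and $\D+\A_\cli$ are both square matrices of order $|V|$. Writing $\B=(b(v,e))$, the $(u,v)$ entry of the product is
\[
(\B\B^T)_{uv} = \sum_{e \in E} b(u,e)\, b(v,e),
\]
and since $b(w,e)\in\{0,1\}$, with $b(w,e)=1$ exactly when $w\in e$, this sum counts the number of edges $e$ that contain both $u$ and $v$. I would then split into the diagonal and off-diagonal cases and match each against the corresponding entry of $\D+\A_\cli$.

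For the diagonal case $u=v$, I would use $b(u,e)^2=b(u,e)$, which holds because $b(u,e)\in\{0,1\}$, to obtain
\[
(\B\B^T)_{uu} = \sum_{e \in E} b(u,e) = |E_{[u]}| = d(u).
\]
On the other side, $\D$ contributes $d_{uu}=d(u)$ while $\A_\cli$ contributes $0$ on the diagonal, as the adjacency matrix of any multigraph has zero diagonal; hence $(\D+\A_\cli)_{uu}=d(u)$ as well.

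For the off-diagonal case $u\neq v$, the sum $\sum_{e} b(u,e)\,b(v,e)$ counts exactly the hyperedges of $\h$ containing both $u$ and $v$. By the definition of the clique multigraph $\cli(\h)$, this number is precisely the multiplicity of the edge $\{u,v\}$ in $\cli(\h)$, which is the $(u,v)$ entry of $\A_\cli$; meanwhile $d_{uv}=0$ since $\D$ is diagonal. Thus $(\B\B^T)_{uv}=(\A_\cli)_{uv}=(\D+\A_\cli)_{uv}$. As the two matrices agree in every entry, the identity $\B\B^T=\D+\A_\cli$ follows. This is essentially a bookkeeping argument rather than one with a substantial obstacle; the only points needing care are to confirm that the off-diagonal count produced by $\B\B^T$ matches the edge-multiplicity convention in the definition of $\cli(\h)$, and to observe that the $k$-uniformity hypothesis is not actually required for the identity to hold.
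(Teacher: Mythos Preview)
Your argument is correct: the entry-wise computation is exactly the right approach, and you handle both the diagonal and off-diagonal cases accurately, matching the sum $\sum_{e} b(u,e)\,b(v,e)$ to $d(u)$ when $u=v$ and to the edge multiplicity in $\cli(\h)$ when $u\neq v$. Your remark that $k$-uniformity is not actually needed is also correct.

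As for comparison with the paper: there is nothing to compare. The paper does not prove this lemma; it is quoted from \cite{Kaue-lap} and stated without proof. Your entry-wise verification is the standard argument and would serve perfectly well as a self-contained proof here.
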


For a non-empty subset of vertices $\alpha = \{v_1,\ldots,v_t\} \subset V$ and a vector $\x=(x_i)$ of dimension $n=|V|$, we denote $\x(\alpha)=x_{v_1}+\cdots+x_{v_t}$, so we can write
\[(\Q\x)_u = (\D\x)_u+(\A_\cli\x)_u = d(u)x_u+\sum_{e \in E_{[u]}}x\left(e-\{u\}\right)  = \sum_{e \in E_{[u]}}\x(e), \quad \forall u \in V(\h).\]

\begin{Lem}[\cite{Kaue-lap}]\label{Lem:xqx}
	Let $\h$ be a $k$-graph with $n$ vertices. For each vector $\x\in\mathbb{R}^n$, we have
	\[\x^T\Q\x=\sum_{e \in E}\left[ \x(e)\right]^2.\]
\end{Lem}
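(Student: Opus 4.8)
The plan is to exploit the factorization $\Q = \B\B^T$ directly, turning the quadratic form into the squared Euclidean norm of a single vector whose entries are exactly the edge-sums $\x(e)$. First I would write
\[\x^T\Q\x = \x^T\B\B^T\x = (\B^T\x)^T(\B^T\x) = \|\B^T\x\|^2,\]
so that the task reduces to computing the entries of the vector $\B^T\x \in \mathbb{R}^{|E|}$, which is naturally indexed by the edges of $\h$.

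Next I would identify each of these entries using only the definition of the incidence matrix. For a fixed edge $e \in E$, the $e$-th coordinate of $\B^T\x$ is $\sum_{v \in V} b(v,e)\, x_v$; since $b(v,e) = 1$ precisely when $v \in e$ and $b(v,e) = 0$ otherwise, this sum collapses to $\sum_{v \in e} x_v = \x(e)$. Hence $\B^T\x$ is exactly the vector whose $e$-th component is $\x(e)$.

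Substituting this into the squared-norm expression then yields the claim immediately:
\[\x^T\Q\x = \|\B^T\x\|^2 = \sum_{e \in E} \left[(\B^T\x)_e\right]^2 = \sum_{e \in E}\left[\x(e)\right]^2.\]

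There is no genuine obstacle here: the identity is essentially a restatement of $\Q = \B\B^T$ once one recognizes that $\B^T$ sends a vertex-vector to its vector of edge-sums. The only point worth flagging is that the argument uses nothing about $k$-uniformity and holds verbatim for \emph{every} $\x \in \mathbb{R}^n$, not merely for the principal eigenvector. This is precisely the generality the statement asserts, and it is what makes the lemma a convenient tool — for instance through the Rayleigh quotient $\rho(\Q) = \max_{\x \neq 0} \x^T\Q\x / \x^T\x$ — in the bounds derived in the later sections.
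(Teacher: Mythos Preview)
Your argument is correct and is the natural one: the factorization $\Q=\B\B^T$ immediately gives $\x^T\Q\x=\|\B^T\x\|^2$, and the $e$-th entry of $\B^T\x$ is $\x(e)$ by the definition of the incidence matrix. Note, however, that the paper does not actually supply a proof of this lemma; it is quoted from \cite{Kaue-lap} without argument, so there is no in-paper proof to compare against. Your observation that $k$-uniformity plays no role is also accurate.
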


\begin{Lem}[\cite{Kaue-lap}]\label{teo:qregular}
	Let $\h$ be a connected $k$-graph with $n$ vertices and let $\rho(\h)$ be its spectral radius. The following statements are equivalent:
	\begin{itemize}
		\item[(a)] $\h$ is regular.
		\item[(b)] $\rho(\h) = kd(\h)$.
		\item[(c)] $\rho(\h) = k\Delta(\h)$.
		\item[(d)] The signless Laplacian vector of $\h$ is $\x=\left(\frac{1}{\sqrt{n}},\ldots,\frac{1}{\sqrt{n}}\right)$.
	\end{itemize}
\end{Lem}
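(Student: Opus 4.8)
The plan is to prove the four statements equivalent by the cyclic chain $(a)\Rightarrow(b)\Rightarrow(c)\Rightarrow(d)\Rightarrow(a)$, after recording two preliminary facts. First, since $\h$ is connected its clique multigraph is connected, so $\A_\cli$ is irreducible, and hence so is the nonnegative matrix $\Q=\D+\A_\cli$ (Lemma \ref{teo:multigrafo}); thus Perron--Frobenius applies, the eigenspace of $\rho$ is one-dimensional, and it is spanned by a positive vector. Second, evaluating the identity $(\Q\x)_u=\sum_{e\in E_{[u]}}\x(e)$ at $\x=\mathbf{1}$ (where $\mathbf{1}(e)=k$ for every edge of a $k$-graph) gives $(\Q\mathbf{1})_u=k\,d(u)$ for every $u$; summing over $u$ and using $\sum_u d(u)=k|E|$ (or Lemma \ref{Lem:xqx}) yields $\mathbf{1}^{T}\Q\mathbf{1}=k\sum_u d(u)=kn\,d(\h)$.

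For $(a)\Rightarrow(b)$: if $\h$ is $r$-regular then $(\Q\mathbf{1})_u=kr$ for all $u$, so $\mathbf{1}$ is a positive eigenvector of $\Q$; by Perron--Frobenius its eigenvalue must be $\rho$, whence $\rho=kr=kd(\h)$. For $(b)\Rightarrow(c)$: by the Rayleigh principle together with the preliminary computation, $\rho\ge \mathbf{1}^{T}\Q\mathbf{1}/\|\mathbf{1}\|^{2}=kd(\h)$; assuming $(b)$, this is an equality, and equality in the Rayleigh quotient for a symmetric matrix forces $\frac{1}{\sqrt{n}}\mathbf{1}$ to be a $\rho$-eigenvector, so $kd(u)=\rho$ for every $u$. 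Hence $\h$ is regular, $\Delta(\h)=d(\h)$, and $\rho=k\Delta(\h)$.

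For $(c)\Rightarrow(d)$: let $m$ be a vertex with $x_m=\max_v x_v$. From $\rho x_m=(\Q\x)_m=\sum_{e\in E_{[m]}}\x(e)\le d(m)\,k\,x_m\le \Delta(\h)\,k\,x_m=\rho x_m$ every inequality is an equality, which forces $d(m)=\Delta(\h)$ and $\x(e)=kx_m$ for each $e\in E_{[m]}$; since each summand obeys $x_v\le x_m$, this gives $x_v=x_m$ for every vertex $v$ contained in an edge through $m$. Running the same argument from each such $v$ (still a maximizer) and iterating along walks, connectedness forces $x_v=x_m$ for all $v$; as $\x>0$ and $\|\x\|=1$ this is precisely $\x=\left(\tfrac{1}{\sqrt{n}},\ldots,\tfrac{1}{\sqrt{n}}\right)$. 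Finally, for $(d)\Rightarrow(a)$, plugging $\x=\left(\tfrac{1}{\sqrt{n}},\ldots,\tfrac{1}{\sqrt{n}}\right)$ into $\rho x_u=(\Q\x)_u$ and using $(\Q\mathbf{1})_u=k\,d(u)$ gives $\rho=k\,d(u)$ for every $u$, so all degrees coincide and $\h$ is regular.

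The algebraic identities are routine; the step requiring the most care is $(c)\Rightarrow(d)$, where one must push the ``the maximum entry propagates'' argument rigorously through the whole connected hypergraph — alternatively one may invoke the equality case of the Perron--Frobenius row-sum bound $\rho\le\max_u(\Q\mathbf{1})_u=k\Delta(\h)$, which for an irreducible matrix is attained iff all row sums are equal. A secondary point to handle cleanly is the justification that connectedness of $\h$ yields irreducibility of $\Q$, since that is what makes Perron--Frobenius and the ``equality in Rayleigh $\Rightarrow$ eigenvector'' facts available.
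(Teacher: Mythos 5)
Your proof is correct. Note first that the paper does not prove this lemma at all: it is imported verbatim from \cite{Kaue-lap}, so there is no in-paper argument to compare against; what you have produced is a self-contained proof of the cited result. Your cyclic chain $(a)\Rightarrow(b)\Rightarrow(c)\Rightarrow(d)\Rightarrow(a)$ is complete and each link checks out: connectedness of $\h$ gives connectedness of the clique multigraph, hence irreducibility of $\A_\cli$ and of $\Q=\D+\A_\cli$ (irreducibility depends only on the off-diagonal pattern, so adding the diagonal $\D$ is harmless), which legitimizes the Perron--Frobenius steps; the row-sum identity $(\Q\mathbf{1})_u=k\,d(u)$ drives $(a)\Rightarrow(b)$ and $(d)\Rightarrow(a)$; equality in the Rayleigh quotient of the symmetric matrix $\Q$ correctly forces $\mathbf{1}$ into the $\rho$-eigenspace in $(b)\Rightarrow(c)$; and the maximum-entry propagation in $(c)\Rightarrow(d)$ is carried out rigorously (termwise equality in $\sum_{e\in E_{[m]}}\x(e)\le k\,d(m)\,x_m\le k\Delta x_m$ forces $x_v=x_m$ on every edge through $m$, and connectedness spreads this to all of $V$), with the row-sum equality case of Perron--Frobenius as a legitimate shortcut. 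The techniques you use are exactly those the present paper deploys elsewhere (compare the eigen-equation $\rho x_u=\sum_{e\in E_{[u]}}\x(e)$ in Theorem \ref{TeoDd} and the Rayleigh-type use of Lemma \ref{Lem:xqx}), so your argument fits the paper's toolkit; the only points worth stating explicitly in a polished write-up are the two you already flagged, irreducibility of $\Q$ from connectedness and the equality case of the Rayleigh/row-sum bounds, plus the trivial degenerate case of a hypergraph with a single vertex and no edges, which all four statements satisfy vacuously.
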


\begin{Lem}[\cite{Kaue-lap}]\label{lem:graumaxmin}
	If $\h$ is a connected $k$-graph and $\rho(\h)$ is its spectral radius, then
	\[kd(\h) \leq \rho(\h) \leq k\Delta(\h).\]
\end{Lem}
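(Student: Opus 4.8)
The plan is to establish the two inequalities separately: the lower bound via the Rayleigh principle applied to a cleverly chosen test vector, and the upper bound by reading off the eigenvalue equation at a coordinate where the principal eigenvector attains its maximum.

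For the lower bound $kd(\h)\le\rho(\h)$, I would take the normalized all-ones vector $\x=\frac{1}{\sqrt{n}}(1,\ldots,1)$. The Rayleigh principle gives $\rho(\h)\ge\x^T\Q\x$, and by Lemma \ref{Lem:xqx} we have $\x^T\Q\x=\sum_{e\in E}[\x(e)]^2$. Since $\h$ is $k$-uniform, every edge satisfies $\x(e)=k/\sqrt{n}$, so this sum equals $|E|\,k^2/n$. A standard incidence count yields $|E|=\frac1k\sum_{v\in V}d(v)=\frac{n\,d(\h)}{k}$, and substituting gives $\x^T\Q\x=kd(\h)$, which is exactly the claimed lower bound.

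For the upper bound $\rho(\h)\le k\Delta(\h)$, I would work with the principal eigenpair $(\rho,\x)$. Since $\h$ is connected, $\Q$ is irreducible, so by Perron--Frobenius $\x$ is (strictly) positive; let $u\in V$ be a vertex with $x_u=\max_{v\in V}x_v>0$. Reading the $u$-th coordinate of $\Q\x=\rho\x$ and using the identity $(\Q\x)_u=\sum_{e\in E_{[u]}}\x(e)$ recorded just before Lemma \ref{Lem:xqx}, I obtain $\rho\,x_u=\sum_{e\in E_{[u]}}\x(e)$. Each edge $e\in E_{[u]}$ has exactly $k$ vertices, each with entry at most $x_u$, so $\x(e)\le kx_u$; moreover $|E_{[u]}|=d(u)\le\Delta(\h)$. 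Hence $\rho\,x_u\le d(u)\,kx_u\le k\Delta(\h)\,x_u$, and dividing by $x_u>0$ completes the argument.

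I do not anticipate a genuine obstacle: both halves are short. The only point requiring a word of justification is the strict positivity of the principal eigenvector, needed so that the division by $x_u$ is legitimate; this is precisely where the hypothesis that $\h$ is connected (hence $\Q$ irreducible) enters, via Perron--Frobenius, exactly as in the classical graph setting.
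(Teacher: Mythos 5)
Your argument is correct. Note that the paper itself does not prove this lemma---it is quoted from \cite{Kaue-lap}---so there is no in-paper proof to compare against; your two-sided argument is the standard one and is complete: the Rayleigh principle with the normalized all-ones vector together with Lemma \ref{Lem:xqx} and the handshake identity $km=\sum_{v}d(v)=nd(\h)$ gives the lower bound, and evaluating $(\Q\x)_u=\sum_{e\in E_{[u]}}\x(e)$ at a maximal entry of the Perron vector (strictly positive since connectivity of $\h$ makes $\Q=\D+\A_\cli$ irreducible) gives the upper bound. The only cosmetic remark is that the upper bound could be obtained even more directly as the maximum row sum of $\Q$, which equals $k\Delta(\h)$, without invoking positivity of the eigenvector.
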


\subsection{Technical lemmas}
In this subsection, we will prove some technical lemmas that will be useful in our proofs.

Let $\h$ be a hypergraph and $\alpha=\{v_ {i_1},\ldots, v_ {i_r}\} \subset V(\h)$ be a subset of vertices. We define the \textit{degree of a set} $\alpha$, as the number of edges that contain simultaneously all vertices of $\alpha$ and denote it for $ d(\alpha)$.

\begin{Lem}\label{lema:km}
	Let $\h=(V,E)$ be a $k$-graph. If $v \in V$, then $\sum_{u \in N[v]}d(\{v,u\}) = kd(v)$.
\end{Lem}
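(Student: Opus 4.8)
The plan is to count, in two different ways, the number of incidences between the vertex $v$ and the edges of $\h$, weighted by the vertices those edges contain. Concretely, consider the quantity
\[
S = \sum_{e \in E_{[v]}} |e|,
\]
that is, the sum over all edges through $v$ of their cardinalities. Since $\h$ is a $k$-graph, every edge $e$ has $|e| = k$, so immediately $S = k\,|E_{[v]}| = k\,d(v)$. This gives one side of the identity.

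For the other side, I would rewrite $S$ by interchanging the order of summation: instead of summing over edges and counting their vertices, sum over vertices $u$ and count how many edges of $E_{[v]}$ contain $u$. An edge of $E_{[v]}$ containing $u$ is precisely an edge containing both $v$ and $u$, so the number of such edges is $d(\{v,u\})$ when $u \neq v$, and is $d(\{v,v\}) = d(v)$ when $u = v$. Note that $d(\{v,u\}) = 0$ unless $u \in N(v)$, so only the vertices of $N[v] = N(v) \cup \{v\}$ contribute. Hence
\[
S = \sum_{e \in E_{[v]}} \sum_{u \in e} 1 = \sum_{u \in V} d(\{v,u\}\text{-many edges through }v) = d(v) + \sum_{u \in N(v)} d(\{v,u\}) = \sum_{u \in N[v]} d(\{v,u\}),
\]
where in the last step I absorbed the $u = v$ term $d(v)$ into the sum by interpreting $d(\{v,v\})$ as $d(v)$, consistent with the definition of the degree of a set applied to the singleton $\{v\}$. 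Combining the two evaluations of $S$ yields $\sum_{u \in N[v]} d(\{v,u\}) = k\,d(v)$.

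The argument is essentially a routine double-counting, so there is no serious obstacle; the only point requiring a little care is the bookkeeping of the diagonal term, i.e. making sure the contribution of $u = v$ itself (each edge through $v$ contains $v$ once) is correctly recorded as $d(v)$ and matches the convention $d(\{v,u\})$ for $u = v$. Once that is pinned down, the identity follows directly.
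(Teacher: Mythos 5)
Your proof is correct, and it is a touch more self-contained than the paper's. The paper disposes of this lemma in one line by observing that $\sum_{u \in N[v]}d(\{v,u\})$ is exactly the sum of the entries in row $v$ of the signless Laplacian matrix: by Lemma \ref{teo:multigrafo}, $\Q = \D + \A_\cli$, so the diagonal entry contributes $d(v)$ and the off-diagonal entries contribute $d(\{v,u\})$ for $u \in N(v)$, while on the other hand the row sum of $\Q = \B\B^T$ at $v$ is $\sum_{e \in E_{[v]}}|e| = k\,d(v)$ because every column of $\B$ sums to $k$. You perform the same double counting of incidences $\{(u,e): u \in e,\ e \in E_{[v]}\}$ directly, without invoking the clique-multigraph decomposition, which makes the argument elementary and independent of Lemma \ref{teo:multigrafo}; the paper's phrasing buys brevity and ties the identity to the matrix structure that is used throughout. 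Your handling of the diagonal term is the right bookkeeping: since $\{v,v\}=\{v\}$ as a set, the definition of the degree of a set gives $d(\{v,v\})=d(v)$, which is precisely the $u=v$ contribution. The only cosmetic issue is the middle expression in your displayed chain (``$d(\{v,u\}$-many edges through $v$)''), which is informal notation rather than a mathematical quantity; replace it with $\sum_{u \in V}\left|\{e \in E : v \in e,\ u \in e\}\right|$ and the argument reads cleanly.
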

\begin{proof}		
	Just observe that, this sum is equal to the sum of the elements on the row associated with vertex $v$ in the signless Laplacian matrix.
\end{proof}

\begin{Lem}\label{lema:cota_para_soma}
	Let $r \leq s$ be integers and $f:\mathbb{R}_{++}^n \rightarrow \mathbb{R}$ the function defined by $f(\x) = \sum_{i = 1}^nx_i^{r}$. If $f(\x)$ is subject to the constraint $\sum_{i = 1}^nx_i^{s}=1$, then $f(\x) \leq \sqrt[s]{n^{s-r}}$. The equality holds if, and only if, $\x=\left(\frac{1}{\sqrt[s]{n}}, \ldots, \frac{1}{\sqrt[s]{n}}\right)$.
\end{Lem}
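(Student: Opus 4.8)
The plan is to reduce the statement to a standard consequence of the concavity of $t\mapsto t^{r/s}$ via a change of variables. First I would set $y_i = x_i^{s}$ for each $i$; since $\x\in\mathbb{R}_{++}^{n}$, each $y_i$ is a positive real, the constraint $\sum_i x_i^{s}=1$ becomes $\sum_{i=1}^{n}y_i=1$, and $x_i^{r}=y_i^{r/s}$, so that $f(\x)=\sum_{i=1}^{n}y_i^{p}$ with $p:=r/s$. Because $1\le r\le s$ (for non-positive $r$ the inequality can in fact fail, so this is the intended range, and it is the only one used in the sequel), we have $p\in(0,1]$. Noting that $n^{1-p}=n^{(s-r)/s}=\sqrt[s]{n^{s-r}}$, it therefore suffices to prove that $\sum_{i=1}^{n}y_i^{p}\le n^{1-p}$ for every $\y\in\mathbb{R}_{++}^{n}$ with $\sum_{i=1}^{n}y_i=1$, with equality exactly when all $y_i$ are equal.

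Next I would invoke concavity. The function $\varphi(t)=t^{p}$ is concave on $\mathbb{R}_{++}$ for $0<p\le1$, so Jensen's inequality with uniform weights $1/n$ gives
\[
\frac1n\sum_{i=1}^{n}y_i^{p}=\frac1n\sum_{i=1}^{n}\varphi(y_i)\le\varphi\!\left(\frac1n\sum_{i=1}^{n}y_i\right)=\varphi\!\left(\frac1n\right)=\left(\frac1n\right)^{p},
\]
whence $\sum_{i=1}^{n}y_i^{p}\le n\cdot n^{-p}=n^{1-p}$. Translating back through $y_i=x_i^{s}$ yields $f(\x)\le\sqrt[s]{n^{s-r}}$, which is the desired bound.

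For the equality case, when $0<p<1$, i.e. $r<s$, the function $\varphi$ is strictly concave, so equality in the Jensen step forces $y_1=\cdots=y_n$; together with $\sum_i y_i=1$ this gives $y_i=1/n$, that is $x_i=n^{-1/s}$ for all $i$, and one checks directly that this vector attains the bound. (When $r=s$ both the left side and the right side equal $1$ and the constraint already forces $f(\x)=1$, so in that degenerate case the inequality is an identity and the characterization is read accordingly.)

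I do not anticipate a genuine obstacle: the content is precisely the power-mean/Jensen inequality. The only points that need care are (i) verifying that the exponent $p=r/s$ lies in $(0,1]$, which is exactly where the hypothesis $r\le s$ (with $r\ge1$) enters and which is what makes $\varphi$ concave, and (ii) handling the equality discussion, where strict concavity — hence the clean ``iff'' — is available precisely when $r<s$. An equally short alternative would be to quote the Power Mean inequality $M_r(\x)\le M_s(\x)$ and simplify, but I would prefer the self-contained argument above.
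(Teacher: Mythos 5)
Your proof is correct, but it takes a genuinely different route from the paper. The paper argues via Lagrange multipliers: it forms $L(\x,\gamma)=f(\x)-\gamma\bigl(\sum_i x_i^{s}-1\bigr)$, solves the stationarity conditions to find the unique critical point $x_i=n^{-1/s}$, and then evaluates $f$ there to get the bound $\sqrt[s]{n^{s-r}}$. Your argument instead substitutes $y_i=x_i^{s}$ and applies Jensen's inequality to the concave map $t\mapsto t^{r/s}$, which is essentially the power-mean inequality. What your approach buys is rigor and the equality case for free: the Lagrange computation only locates a critical point, and since the feasible set $\{\x\in\mathbb{R}_{++}^{n}:\sum_i x_i^{s}=1\}$ is not compact, concluding that this point is a \emph{global} maximum (and that equality holds only there) requires an additional argument that the paper leaves implicit; strict concavity of $t^{p}$ for $0<p<1$ settles both points cleanly in your version. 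You also correctly note that the hypothesis must be read as $1\le r\le s$ (for $r\le 0$ the claimed bound can fail or the equality characterization degenerates), which is consistent with the only use made of the lemma in the paper, namely $r=1$, $s=2$ in Corollary \ref{coro:cota_para_soma}; your handling of the degenerate case $r=s$ is likewise accurate, since there the constraint forces $f(\x)=1$ identically and the stated ``iff'' does not literally hold. What the paper's approach buys, by contrast, is a mechanical derivation that produces the optimizer without having to guess the right convexity substitution, but as written it is the less complete of the two arguments.
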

\begin{proof}
	Let $g(\x) = \sum_{i = 1}^nx_i^{s}-1$ and $L(\x,\gamma) = f(\x) - \gamma g(\x)$. We will compute the maximum value of $f(\x)$ using Lagrange multipliers:
	$$\begin{cases}
	L(\x,\gamma)_i=0,\, \forall\, i \in [n], \\
	L(\x,\gamma)_\gamma=0. \\
	\end{cases}$$
	
	\noindent From the derivatives in $ x_i $, we have
	\[rx_i^{r-1}-\gamma sx_i^{s-1}=0 \quad \Rightarrow \quad x_i = \sqrt[s-r]{\frac{r}{s\gamma}},\quad \forall \, i \in [n].\]
	Now, from the derivative of $\gamma$, we obtain the original constraint $\sum_{i = 1}^nx_i^{s}=1$. So,
	\[\sum_{i = 1}^n\left( \sqrt[s-r]{\frac{r}{s\gamma}} \right)^{s}=1 \Rightarrow  n\left(\sqrt[s-r]{\frac{r}{s\gamma}}\right)^s = 1  \Rightarrow  \sqrt[s-r]{\frac{r}{s\gamma}} = \frac{1}{\sqrt[s]{n}} \Rightarrow x_i = \frac{1}{\sqrt[s]{n}},\; \forall \, i \in [n].\]
	Thus, $\hat{\x} = \left(\frac{1}{\sqrt[s]{n}}, \cdots, \frac{1}{\sqrt[s]{n}}\right)$ maximizes $f(\x)$. Now, we will determine what this maximum value is:
	\[f(\hat{\x}) = \sum_{i =1}^n\left( \frac{1}{\sqrt[s]{n}}\right)^{r} = \frac{n}{(\sqrt[s]{n})^{r}} = \sqrt[s]{n^{s-r}}.\]
	Therefore, the result follows.
\end{proof}

\begin{Cor}\label{coro:cota_para_soma}
	Let $\h$ be a connected $k$-graph. If $\x$ is the signless Laplacian vector of $\h$, then
	\[1 \leq \sum_{v \in V}x_v \leq \sqrt{n}.\]
	The first equality is true if, and only if, $\h$ has a single vertex and the second is true if, and only if, $\h$ is regular.
\end{Cor}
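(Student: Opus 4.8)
The plan is to apply Lemma \ref{lema:cota_para_soma} with $s=2$ (since $\x$ is the normalized principal eigenvector, $\sum_v x_v^2 = 1$) and $r=1$, giving immediately $\sum_{v\in V} x_v \le \sqrt[2]{n^{2-1}} = \sqrt{n}$, with equality in the lemma exactly when $\x = (1/\sqrt n,\ldots,1/\sqrt n)$; by Lemma \ref{teo:qregular}(d) this happens precisely when $\h$ is regular. So the upper bound and its equality case come essentially for free.

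For the lower bound, I would argue that for any unit vector $\x$ with nonnegative entries, $\sum_v x_v \ge \sqrt{\sum_v x_v^2} = 1$, since $\left(\sum_v x_v\right)^2 = \sum_v x_v^2 + \sum_{u\ne v} x_u x_v \ge \sum_v x_v^2 = 1$ and all cross terms are nonnegative (here using that $\x$ is the principal eigenvector, hence strictly positive by Perron–Frobenius). Equality $\sum_v x_v = 1$ forces every cross term $x_u x_v$ to vanish, which for a strictly positive vector is impossible unless there is only one vertex; conversely, if $\h$ has a single vertex then $n=1$ and $\x=(1)$, so $\sum_v x_v = 1$.

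The main subtlety — really the only place one must be slightly careful — is the equality analysis for the lower bound: one must invoke connectedness (so that $\Q$ is irreducible and Perron–Frobenius gives $x_v > 0$ for all $v$) to conclude that the cross terms are strictly positive whenever $n \ge 2$, ruling out equality in that case. Everything else is a direct substitution into the two cited lemmas, so I expect the write-up to be short.
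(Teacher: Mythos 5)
Your proposal is correct and follows essentially the same route as the paper: the upper bound and its equality case come from Lemma \ref{lema:cota_para_soma} with $s=2$, $r=1$ together with Lemma \ref{teo:qregular}, and the lower bound from strict positivity of the Perron--Frobenius eigenvector plus the normalization $\sum_v x_v^2=1$. The only cosmetic difference is in the lower bound, where you expand $\left(\sum_v x_v\right)^2$ and use positive cross terms, while the paper compares termwise via $x_v > x_v^2$ (valid since $0<x_v<1$ when $n\geq 2$); both arguments are equally valid and handle the equality case identically.
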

\begin{proof}
	If $\h$ has only one vertex $v$, then $x_v=1$. Assume that $\h$ has more than one vertex, so $0 < x_v < 1$ for each $v \in V$. Thus, $x_v > x_v^{2}$, therefore $$\sum_{v \in V}x_v > \sum_{v \in V}x_v^2 = 1.$$
	The second inequality follows from Lemma \ref{lema:cota_para_soma}, setting $s=2$ and $r=1$.
\end{proof}

\begin{Lem}\label{lem:rho-kd}
	Let $\h$ be a connected $k$-graph. If $ (\rho, \x) $ is its signless Laplacian eigenpair, then $$\rho\sum_{v \in V}x_v = k\sum_{v \in V}d(v)x_v.$$
\end{Lem}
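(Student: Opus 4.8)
The plan is to evaluate the quantity $\mathbf{1}^T\Q\x$ in two different ways, where $\mathbf{1}$ denotes the all-ones vector of dimension $n=|V|$. On one side, since $\Q\x=\rho\x$, we immediately get $\mathbf{1}^T\Q\x = \rho\,\mathbf{1}^T\x = \rho\sum_{v\in V}x_v$. The work is therefore all on the other side, namely computing $\mathbf{1}^T\Q\x$ by first determining $\mathbf{1}^T\Q$, i.e. the vector of column sums (equivalently, by symmetry of $\Q$, the row sums) of the signless Laplacian matrix.

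The key step is to show that the row of $\Q$ indexed by a vertex $v$ sums to exactly $kd(v)$. This is essentially the content of Lemma~\ref{lema:km}: the sum of the entries in row $v$ of $\Q$ equals $\sum_{u\in N[v]}d(\{v,u\})$, and that lemma asserts this equals $kd(v)$. (One can also see it directly from $\Q=\D+\A_\cli$ of Lemma~\ref{teo:multigrafo}: the diagonal contributes $d(v)$, while the off-diagonal contributes $\sum_{u\in N(v)}d(\{v,u\}) = kd(v)-d(v)$, since $d(\{v,v\})=d(v)$.) Hence $\mathbf{1}^T\Q$ is the row vector whose $v$-th entry is $kd(v)$, and consequently $\mathbf{1}^T\Q\x = \sum_{v\in V}kd(v)x_v = k\sum_{v\in V}d(v)x_v$. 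Equating the two expressions for $\mathbf{1}^T\Q\x$ yields the claim.

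Alternatively, one could argue by double counting using the identity $(\Q\x)_u=\sum_{e\in E_{[u]}}\x(e)$ recorded just before Lemma~\ref{Lem:xqx}: summing over $u\in V$ gives $\rho\sum_{v}x_v = \sum_{u\in V}\sum_{e\in E_{[u]}}\x(e) = \sum_{e\in E}\sum_{u\in e}\x(e) = k\sum_{e\in E}\x(e)$, and then interchanging the order of summation in $\sum_{e\in E}\x(e)=\sum_{e\in E}\sum_{v\in e}x_v=\sum_{v\in V}d(v)x_v$ finishes the proof. I do not anticipate a genuine obstacle here; the only thing to be careful about is the bookkeeping of the diagonal term $d(\{v,v\})=d(v)$ when passing between the clique-multigraph picture and Lemma~\ref{lema:km}, so I would phrase the argument through Lemma~\ref{lema:km} (or through the double-counting identity) to keep it clean.
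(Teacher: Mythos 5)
Your main argument is correct and is essentially the paper's proof: the paper likewise sums the eigenvalue equation $\rho x_u=\sum_{e\in E_{[u]}}\x(e)$ over all vertices and invokes Lemma~\ref{lema:km} to identify the row sums of $\Q$ with $kd(v)$; your $\mathbf{1}^T\Q\x$ formulation is just a cleaner packaging of that same computation. Your alternative double-counting over edges is a minor variant that bypasses Lemma~\ref{lema:km} (and the index swap the paper performs silently), but it is not a genuinely different route.
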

\begin{proof}
	Let $u$ be a vertex, note that $\rho x_u = \sum_{e \in E_{[u]}}\x(e)$, summing over the set of vertices we obtain
	\begin{eqnarray}
	\rho\sum_{v \in V}x_v &=& \sum_{v \in V}\left(\sum_{e \in E_{[v]}}\x(e) \right) = \sum_{v \in V}\left(\sum_{u \in N[v]}d(\{v,u\})x_v \right) \notag\\
	&=& \sum_{v \in V}x_v\left(\sum_{u \in N[v]}d(\{v,u\})\right) \stackrel{\mathrm{Lemma}\; \ref{lema:km}}{=} k\sum_{v \in V}d(v)x_v.\notag
	\end{eqnarray}
	Therefore, the result follows.
\end{proof}

\section{Bounds for signless Laplacian vector entries}\label{sec:bounds}
In this section, we obtain bounds for the extreme entries of the principal eigenvector of the signless Laplacian matrix, these bounds are computed using important classical and spectral parameters. In addition, we study  inequalities involving the ratio and difference between the two extreme entries of this vector.

\begin{Def}	
	Let $\h$ be a connected $k$-graph and $\x=(x_v)$ its signless Laplacian vector. We define
$$x_{\min} = \min_{ v \in V(\h)}\{x_v\}, \quad  x_{\max} = \max_{ v \in V(\h)}\{x_v\}, \quad \gamma(\h) = \frac{x_{\max}}{x_{\min}}.$$
\end{Def}

\begin{Obs} Let $\h$ be a connected $k$-graph. By Lemma \ref{teo:qregular}, we observe that $x_{\min} \leq \frac{1}{\sqrt{n}} \leq x_{\max}$, with equality only when $\h$ is regular.\end{Obs}

In Theorem 21 of \cite{Kaue-lap}, the authors obtain a bound for diameter of a hypergraph as a function of spectral radius, second largest eigenvalue and smallest entry of the principal eigenvector of $\Q$. With some algebraic operations, we obtain the following upper bound for the smallest entry of the signless Laplacian vector.

\begin{Cor}
	Let $\h$ be a connected $k$-graph with more than one edge.  If $ (\rho, \x) $ is its signless Laplacian eigenpair and the diameter of $\h$ is $D$, then
	$$x_{\min}\leq\frac{1}{\sqrt{1+\left(\frac{\rho}{\lambda_2} \right)^{D-1}}},$$ where $\lambda_2$ is the second largest eigenvalue of $\Q$.
\end{Cor}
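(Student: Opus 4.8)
The plan is to extract the inequality from the diameter bound of Theorem~21 in \cite{Kaue-lap} by simply solving for $x_{\min}$. That theorem (in the form I expect it) says something like $D \leq \left\lceil \dfrac{\ln\!\big((1-x_{\min}^2)/x_{\min}^2\big)}{\ln(\rho/\lambda_2)} \right\rceil + 1$, i.e. the diameter is controlled by how small the least Perron entry can be relative to the spectral gap ratio $\rho/\lambda_2$. The first step is therefore to state that inequality precisely in the normalization used here (the principal eigenvector of $\Q$ has unit $\ell^2$-norm, and for a connected hypergraph with more than one edge we have $0 < x_{\min} < 1$, so $1 - x_{\min}^2 > 0$ and the logarithm is well defined; also $\rho > \lambda_2$ since $\h$ is connected, so $\ln(\rho/\lambda_2) > 0$).

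Next I would drop the ceiling: since $D$ is an integer and $D \le \lceil t \rceil + 1$ forces $D \le t + 1$ whenever... actually more carefully, the cleanest route is to go the other way. The diameter bound is proved by the standard argument that if $D$ is the distance between a vertex where $\x$ attains $x_{\min}$ and some other vertex, then powers $\Q^{D-1}$ (or $(\Q)^{D-1}$ applied appropriately) cannot annihilate a certain matrix entry, which yields $\big(\rho/\lambda_2\big)^{D-1} \le (1-x_{\min}^2)/x_{\min}^2$ or an equivalent inequality; rearranging this gives directly
\[
x_{\min}^2\Big(1 + \big(\tfrac{\rho}{\lambda_2}\big)^{D-1}\Big) \le 1,
\]
hence $x_{\min} \le \Big(1 + (\rho/\lambda_2)^{D-1}\Big)^{-1/2}$, which is exactly the claimed bound. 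So the real content is just isolating the right intermediate inequality from the proof (or statement) of Theorem~21 and algebraically rearranging it; I would write out the two or three lines of rearrangement explicitly.

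The main obstacle is bookkeeping rather than mathematics: one must check that the normalization and the precise form of the cited diameter inequality match, so that the exponent is $D-1$ (not $D$ or $D+1$) and the quantity inside is $1 - x_{\min}^2$ over $x_{\min}^2$ (reflecting that $\sum_v x_v^2 = 1$, so the "mass outside the $x_{\min}$ coordinate" is $1 - x_{\min}^2$). Assuming Theorem~21 of \cite{Kaue-lap} is stated with these conventions, the corollary follows immediately; if instead it is phrased with a ceiling on $D$, I would first replace it by the underlying real inequality $(\rho/\lambda_2)^{D-1} \le (1-x_{\min}^2)/x_{\min}^2$ (which is what the proof actually establishes before taking the ceiling) and then rearrange.
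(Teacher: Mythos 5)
Your proposal is correct and matches the paper's approach exactly: the paper gives no independent argument, it simply states that the bound follows from Theorem~21 of \cite{Kaue-lap} ``with some algebraic operations,'' namely the rearrangement of $(\rho/\lambda_2)^{D-1}\leq(1-x_{\min}^2)/x_{\min}^2$ into $x_{\min}\leq\bigl(1+(\rho/\lambda_2)^{D-1}\bigr)^{-1/2}$ that you carry out. Your caution about the ceiling and the exponent is reasonable but does not change the route.
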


\begin{Teo}\label{teo:cotaxi}
	Let $\h$ be a connected $k$-graph with $n$ vertices. If $ (\rho, \x) $ is its signless Laplacian eigenpair, then
	$$x_i \leq \frac{\sqrt{k}d(i)}{\rho}.$$
\end{Teo}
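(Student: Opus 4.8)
The plan is to use the eigenvalue equation at the vertex $i$ together with the fact that each $\x(e)$ is a sum of $k$ entries of the unit vector $\x$. Recall from the computation in Section \ref{sec:pre} that $\rho x_i = (\Q\x)_i = \sum_{e \in E_{[i]}} \x(e)$, where $E_{[i]}$ is the edge neighborhood of $i$ and $|E_{[i]}| = d(i)$. So it suffices to bound $\sum_{e \in E_{[i]}} \x(e)$ from above by $\sqrt{k}\,d(i)$. Since for each edge $e$ we have $\x(e) = x_{v_1}+\cdots+x_{v_k}$ with $k$ terms, the Cauchy--Schwarz inequality gives $\x(e) \leq \sqrt{k}\sqrt{\sum_{v\in e} x_v^2}$, and summing over the $d(i)$ edges through $i$ yields $\sum_{e\in E_{[i]}}\x(e) \leq \sqrt{k}\sum_{e\in E_{[i]}}\sqrt{\sum_{v\in e}x_v^2}$.

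The next step is to control $\sum_{e\in E_{[i]}}\sqrt{\sum_{v\in e}x_v^2}$ by $d(i)$. The cleanest route is a second application of Cauchy--Schwarz (or concavity of the square root): $\sum_{e\in E_{[i]}}\sqrt{\sum_{v\in e}x_v^2} \leq \sqrt{d(i)}\sqrt{\sum_{e\in E_{[i]}}\sum_{v\in e}x_v^2}$. Now the double sum $\sum_{e\in E_{[i]}}\sum_{v\in e}x_v^2$ counts, for each vertex $v$, the quantity $x_v^2$ once for every edge that contains both $v$ and $i$, i.e. it equals $\sum_{v} d(\{i,v\})x_v^2 = \sum_{v\in N[i]} d(\{i,v\})x_v^2$. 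Since every $x_v^2 \le \sum_{v\in V} x_v^2 = 1$ is not quite tight enough on its own, I would instead bound $x_v^2 \leq x_{\max}^2 \leq 1$ trivially, using $\sum_{v\in N[i]} d(\{i,v\}) = k\,d(i)$ from Lemma \ref{lema:km}. This gives $\sum_{e\in E_{[i]}}\sum_{v\in e}x_v^2 \leq k\,d(i)$, hence $\sum_{e\in E_{[i]}}\sqrt{\sum_{v\in e}x_v^2} \leq \sqrt{d(i)}\sqrt{k\,d(i)} = \sqrt{k}\,d(i)$, and combining the two displays, $\rho x_i \leq \sqrt{k}\cdot\sqrt{k}\,d(i) = k\,d(i)$ — which is the weaker bound $x_i \le kd(i)/\rho$, off by a factor $\sqrt{k}$.

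To recover the sharp constant I would avoid the wasteful first Cauchy--Schwarz and argue more directly. A better approach: write $\rho x_i = \sum_{e\in E_{[i]}}\x(e) = \sum_{e\in E_{[i]}}\sum_{v\in e}x_v = \sum_{v\in N[i]} d(\{i,v\})\,x_v$, using Lemma \ref{lema:km}'s counting identity. Apply Cauchy--Schwarz to this single sum with weights $d(\{i,v\})$: $\sum_{v\in N[i]} d(\{i,v\})x_v \leq \sqrt{\sum_{v\in N[i]}d(\{i,v\})}\cdot\sqrt{\sum_{v\in N[i]}d(\{i,v\})x_v^2} \leq \sqrt{k\,d(i)}\cdot\sqrt{\sum_{v\in N[i]}d(\{i,v\})x_v^2}$. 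The main obstacle is then estimating $\sum_{v\in N[i]}d(\{i,v\})x_v^2$: bounding each $x_v^2$ by $1$ only gives $k\,d(i)$ again. The fix is to keep this factor as is and combine with the normalization: since the sets $N[i]$ overlap, one cannot naively sum. Instead I expect the intended argument keeps $\sum_{v\in N[i]}d(\{i,v\})x_v^2 \leq k\,d(i)\max_v x_v^2$ and pairs it with a separate bound on $x_{\max}$, or — more likely — simply uses $d(\{i,v\}) \le d(i)$ together with $\sum_{v\in N[i]} x_v^2 \le 1$ to write $\sum_{v\in N[i]}d(\{i,v\})x_v^2 \le d(i)\sum_{v\in V}x_v^2 = d(i)$, yielding $\rho x_i \le \sqrt{k\,d(i)}\cdot\sqrt{d(i)} = \sqrt{k}\,d(i)$, exactly the claimed bound. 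So the key steps in order are: (1) rewrite $\rho x_i$ via the eigen-equation and the counting identity as $\sum_{v\in N[i]}d(\{i,v\})x_v$; (2) apply Cauchy--Schwarz splitting off $\sqrt{\sum d(\{i,v\})} = \sqrt{k\,d(i)}$ by Lemma \ref{lema:km}; (3) bound the remaining factor $\sum_{v\in N[i]}d(\{i,v\})x_v^2 \le d(i)\sum_{v\in V}x_v^2 = d(i)$ using $d(\{i,v\})\le d(i)$ and $\|\x\|_2=1$; (4) divide by $\rho$. The delicate point — and the step I would double-check — is step (3): the bound $d(\{i,v\}) \leq d(i)$ is valid since every edge counted by $d(\{i,v\})$ contains $i$, and the vertices of $N[i]$ are distinct so the squares $x_v^2$ over $v\in N[i]$ sum to at most $\sum_{v\in V}x_v^2 = 1$.
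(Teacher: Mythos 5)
Your final argument (steps (1)--(4)) is correct and essentially reproduces the paper's proof: both rest on the eigen-equation $\rho x_i=\sum_{e\in E_{[i]}}\x(e)$, Cauchy--Schwarz, the counting identity of Lemma \ref{lema:km}, the bound $d(\{i,v\})\le d(i)$, and the normalization $\sum_{v\in V}x_v^2=1$; you merely merge the paper's two Cauchy--Schwarz applications (over the edges in $E_{[i]}$, then within each edge) into a single weighted Cauchy--Schwarz over $N[i]$, arriving at the same intermediate estimate $\rho^2x_i^2\le kd(i)\sum_{v\in N[i]}d(\{i,v\})x_v^2\le kd(i)^2$. The exploratory first attempt, which loses a factor of $\sqrt{k}$, is correctly discarded, so the proposal as finally stated has no gap.
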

\begin{proof}
	Let $u \in V$ be a vertex, so $\rho x_u = \sum_{e \in E_{[u]}}\x(e)$, by cauchy-schwarz inequality we have $\rho^2 x_u^2 \leq d(u)\sum_{e \in E_{[u]}}(\x(e))^2$. Therefore
	\begin{eqnarray}
	\frac{\rho^2x_u^2}{d(u)} &\leq& \sum_{e \in E_{[u]}}(\x(e))^2 \leq  \sum_{e \in E_{[u]}}k(x_{v_{e1}}^2+\cdots+x_{v_{ek}}^2)\notag \\
	&=& k\sum_{v \in N[u]}d(\{u,v\})x_v^2 \leq kd(u)\sum_{v \in N[u]}x_v^2 \leq kd(u). \notag
	\end{eqnarray}
	Isolating $x_u$ we obtain the result.
\end{proof}

\begin{Obs}
	Let $\h$ be a connected $k$-graph and $u \in V$ be a vertex. By Theorem \ref{teo:cotaxi}, we have that $x_{\min}\leq \frac{\sqrt{k}\delta}{\rho}$ and $x_{\max}\leq \frac{\sqrt{k}\Delta}{\rho}$.
\end{Obs}

Let $\h$ be a hypergraph. Its \textit{Zagreb index} is defined as the sum of the squares of the degrees of its vertices. More precisely, $$Z(\h) = \sum_{v \in V(\h)}d(v)^2.$$ This is an important parameter in graph theory, having chemistry applications, \cite{energy-Zagreb}.

\begin{Teo}\label{txm} Let $\h$ be a connected $k$-graph with $n$	vertices. If $ (\rho, \x) $ is its signless Laplacian eigenpair, then	$$x_{\max}\geq \displaystyle\frac{\rho}{k\sqrt{Z(\h)}}.$$ The equality is achieved if, and only if, $\h$ is regular.
\end{Teo}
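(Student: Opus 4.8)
The plan is to convert the global quantity $x_{\max}$ into a pointwise upper bound on each entry, and then sum. Starting from the eigenvalue equation, for every vertex $u$ we have $\rho x_u = \sum_{e \in E_{[u]}} \x(e)$. Since each edge has exactly $k$ vertices and every entry of $\x$ is at most $x_{\max}$, we get $\x(e) \leq k\,x_{\max}$ for each $e$, and summing over the $d(u)$ edges through $u$ yields
\[\rho\, x_u \leq k\, d(u)\, x_{\max}, \qquad\text{i.e.}\qquad x_u \leq \frac{k\, d(u)\, x_{\max}}{\rho}.\]

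Next I would square this inequality and sum over $v \in V$. Because $\x$ is normalized, $\sum_{v\in V} x_v^2 = 1$, so the left side collapses to $1$, while the right side becomes $\frac{k^2 x_{\max}^2}{\rho^2}\sum_{v\in V} d(v)^2 = \frac{k^2 x_{\max}^2}{\rho^2}\,Z(\h)$. Rearranging gives $\rho^2 \leq k^2 x_{\max}^2\, Z(\h)$, and taking positive square roots produces exactly $x_{\max} \geq \frac{\rho}{k\sqrt{Z(\h)}}$.

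For the equality discussion: if equality holds, then each summand above must be tight, so $x_v = \frac{k\, d(v)\, x_{\max}}{\rho}$ for every $v$; tracing this back through $\rho x_v = \sum_{e\in E_{[v]}} \x(e) \leq k\,d(v)\,x_{\max}$ forces $\x(e) = k\,x_{\max}$ for every edge $e$ containing $v$, hence every vertex lying in such an edge attains $x_{\max}$. As $\h$ is connected (so each vertex lies in an edge), this propagates to $x_v = x_{\max}$ for all $v$, i.e.\ $\x = \left(\tfrac{1}{\sqrt n},\ldots,\tfrac{1}{\sqrt n}\right)$, whence $\h$ is regular by Lemma \ref{teo:qregular}. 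Conversely, if $\h$ is $r$-regular then $x_{\max} = \tfrac{1}{\sqrt n}$, $\rho = kr$ and $Z(\h) = nr^2$, so $\frac{\rho}{k\sqrt{Z(\h)}} = \tfrac{1}{\sqrt n} = x_{\max}$.

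I do not anticipate a serious obstacle; the one point needing care is the equality analysis, namely arguing that tightness in the summed inequality forces $\x$ to be \emph{constant} rather than merely proportional to the degree sequence — the extra ingredient being that tightness must also hold in each individual bound $\x(e)\leq k\,x_{\max}$, which is what pins every relevant entry to $x_{\max}$.
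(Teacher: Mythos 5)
Your proof is correct and follows essentially the same route as the paper: bound $\rho x_u \leq k\,d(u)\,x_{\max}$ from the eigenvalue equation, square, sum over vertices, and use the normalization $\sum_v x_v^2 = 1$ to obtain $\rho^2 \leq k^2 x_{\max}^2 Z(\h)$. Your equality analysis is in fact spelled out more carefully than the paper's (which simply asserts that equality forces $x_u = x_{\max}$ for all $u$), but it reaches the same conclusion by the same mechanism.
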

\begin{proof}
	We observe that $\rho x_u  = \sum_{e \in E_{[u]}}\x(e)\leq kd(u)x_{\max}$, hence $\rho^2 x_u^{2} \leq k^2d(u)^2x_{\max}^{2}$. Summing over the of vertices, we have \[\rho^2\sum_{u \in V}
	x_u^{2} \leq  \sum_{u \in V}k^2d(u)^2x_{\max}^{2}\quad \Rightarrow
	\quad \rho^2 \leq x_{\max}^2\left( k^2\sum_{u \in
		V}d(u)^2\right).\]
	Observe that equality occurs if, and only if, $ x_u = x_{\max} $ for all $ u \in V $. That is, equality occurs only when $ \h $ is regular.
\end{proof}

\begin{Teo}Let $\h$ be a connected $k$-graph with $n$ vertices. If $ (\rho, \x) $ is its signless Laplacian eigenpair, then	$$x_{\min}\leq \sqrt{\frac{k\delta^2}{\rho^2+k\delta^2(n-\delta)}}.$$
	The equality is achieved if, and only if, $\h$ is regular.
\end{Teo}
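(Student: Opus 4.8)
The target bound is
$$x_{\min}\leq \sqrt{\frac{k\delta^2}{\rho^2+k\delta^2(n-\delta)}},$$
so after clearing denominators I must establish the equivalent inequality $x_{\min}^2\bigl(\rho^2+k\delta^2(n-\delta)\bigr)\leq k\delta^2$, that is $\rho^2 x_{\min}^2 \leq k\delta^2\bigl(1-(n-\delta)x_{\min}^2\bigr)$. The plan is to pick a vertex $u$ attaining $x_u=x_{\min}$, start from the eigenvalue equation $\rho x_u=\sum_{e\in E_{[u]}}\x(e)$, bound $\rho^2 x_u^2$ from above via Cauchy--Schwarz exactly as in the proof of Theorem \ref{teo:cotaxi}, and then control the resulting sum $\sum_{v\in N[u]}x_v^2$ from above by $1-(n-\delta)x_{\min}^2$ rather than by the crude bound $1$ used there. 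The extra slack $(n-\delta)x_{\min}^2$ is precisely what produces the improvement, and matching it to the stated $(n-\delta)$ (not $n-k$) will be the crux.

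\textbf{Key steps.} First, let $u$ be a vertex with $d(u)=\delta$ and $x_u=x_{\min}$ (I expect the sharp case forces $x_{\min}$ to be attained at a minimum-degree vertex; I will check that the argument only needs $d(u)\le$ something and $x_u=x_{\min}$, handling the degree bound carefully). Applying Cauchy--Schwarz to $\rho x_u=\sum_{e\in E_{[u]}}\x(e)$ gives $\rho^2 x_u^2\leq d(u)\sum_{e\in E_{[u]}}(\x(e))^2$, and then $(\x(e))^2\le k\sum_{v\in e}x_v^2$ yields, after collecting terms with Lemma \ref{lema:km},
$$\rho^2 x_{\min}^2 \leq k\,d(u)\sum_{v\in N[u]}d(\{u,v\})x_v^2 \leq k\,\delta^2\sum_{v\in N[u]}x_v^2.$$
Second, I bound $\sum_{v\in N[u]}x_v^2$ from above. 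Since $\x$ is normalized, $\sum_{v\in V}x_v^2=1$, so $\sum_{v\in N[u]}x_v^2 = 1-\sum_{v\notin N[u]}x_v^2$. The key observation is that $|N[u]|\le \delta+1$: a vertex of degree $\delta$ lies in exactly $\delta$ edges, each contributing at most $k-1$ other vertices, so $|N[u]|\le \delta(k-1)+1$ in general -- this is where the discrepancy with $n-\delta$ must be reconciled. I will instead count the complementary set: the number of vertices \emph{outside} $N[u]$ is at least $n-(\delta+1)\cdot$(something), and each such vertex contributes at least $x_{\min}^2$ to the subtracted sum, giving $\sum_{v\in N[u]}x_v^2\le 1-(n-\delta)x_{\min}^2$ once the count of outside vertices is shown to be at least $n-\delta$. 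Substituting this into the first step gives $\rho^2 x_{\min}^2\le k\delta^2\bigl(1-(n-\delta)x_{\min}^2\bigr)$, which rearranges to the claim; equality throughout forces $x_v$ constant and $d(\{u,v\})$ behaving rigidly, i.e. $\h$ regular, matching the stated equality condition.

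\textbf{Main obstacle.} The delicate point is justifying the exact count $n-\delta$ for the number of vertices whose squared entries are subtracted, so that each contributes $\geq x_{\min}^2$ and the bound reads $1-(n-\delta)x_{\min}^2$. A naive neighborhood count gives $|N[u]|$ governed by $k$ and $\delta$ together (roughly $\delta(k-1)+1$), not simply $\delta$, so the factor $(n-\delta)$ cannot come from bounding $|N[u]|$ directly. I expect the correct route is to sharpen the chain \emph{before} summing over all of $N[u]$: in the step $\sum_{v\in N[u]}d(\{u,v\})x_v^2\le \delta\sum_{v\in N[u]}x_v^2$, one should instead peel off the term $v=u$ (where $d(\{u,u\})=d(u)=\delta$ and $x_u^2=x_{\min}^2$) and bound the remaining off-diagonal contribution, then combine with $\sum_{v\in V}x_v^2=1$ and $x_v^2\ge x_{\min}^2$ for the $n-1$ vertices other than $u$, together with a counting identity from Lemma \ref{lema:km}. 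Getting the bookkeeping to collapse to exactly $(n-\delta)$ rather than $(n-1)$ or $(n-k)$ is the step I would verify most carefully, since it is precisely where the earlier mismatched attempt went wrong.
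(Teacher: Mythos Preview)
Your plan is exactly the paper's argument: take a vertex $u$ of minimum degree, apply Cauchy--Schwarz to $\rho x_u=\sum_{e\in E_{[u]}}\x(e)$, then $(\x(e))^2\le k\sum_{v\in e}x_v^2$, regroup as $k\delta\sum_{v\in N[u]}d(\{u,v\})x_v^2$, bound $d(\{u,v\})\le\delta$, and write $\sum_{v\in N[u]}x_v^2=1-\sum_{v\notin N[u]}x_v^2$. The only deviation is that the paper does not assume $x_u=x_{\min}$; it picks $u$ with $d(u)=\delta$ and only at the end uses $x_u\ge x_{\min}$ to replace $(\rho x_u)^2$ by $(\rho x_{\min})^2$. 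So your worry about needing both conditions simultaneously is unnecessary.

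The step you flagged as the main obstacle is exactly where the paper offers no justification. After reaching $k\delta^2\bigl(1-\sum_{v\notin N[u]}x_v^2\bigr)$ it simply asserts ``Therefore $(\rho x_{\min})^2\le k\delta^2(1-(n-\delta)x_{\min}^2)$''. That would require $|N[u]|\le\delta$, which is false already for graphs (there $|N[u]|=\delta+1$) and in general one only has $|N[u]|\le\delta(k-1)+1$. So the gap you diagnosed is a genuine gap in the paper's proof, not in your reconstruction. A further sanity check: for an $r$-regular $k$-graph the stated right-hand side equals $1/\sqrt{k+n-r}$, which matches $x_{\min}=1/\sqrt{n}$ only when $r=k$, contradicting the claimed equality characterization.

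If you want an argument that actually closes, do not discard the weights via $d(\{u,v\})\le\delta$. With $w_v:=d(\{u,v\})$ (and $w_v=0$ off $N[u]$) one has $\sum_v w_v=k\delta$ by Lemma~\ref{lema:km}, hence
\[
\sum_v w_v x_v^2=\delta-\sum_v(\delta-w_v)x_v^2\le \delta-\delta(n-k)x_{\min}^2,
\]
since $\sum_v(\delta-w_v)=\delta(n-k)$ and each $x_v^2\ge x_{\min}^2$. This yields $x_{\min}\le\sqrt{k\delta^2/\bigl(\rho^2+k\delta^2(n-k)\bigr)}$, with equality precisely when $\h$ is regular. The evidence strongly suggests the ``$n-\delta$'' in the statement is a typo for ``$n-k$'', and your instinct that the naive neighborhood count cannot produce $(n-\delta)$ was correct.
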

\begin{proof}
	Let $u$ be a vertex with minimum degree, by cauchy-schwarz inequality we have that
	\begin{eqnarray}
	(\rho x_u)^2 = \left(\sum_{e \in E_{[u]}}\x(e)\right)^2 \leq \delta \sum_{e \in E_{[u]}}(\x(e))^2 \leq \;k\delta\!\!\!\!\!\!\!\!\!\sum_{e=\{v_1,\ldots,v_k\} \in E_{[u]}}\!\!\!\!\!\!\!\!(x_{v_1}^2+\cdots+x_{v_k}^2)\notag\\ 
	= k\delta\left(\sum_{v \in N[u]}d(\{u,v\})x_v^2 \right)\leq k\delta^2\left(\sum_{v \in N[u]}x_v^2 \right) = k\delta^2\left(1 - \sum_{v \in V\smallsetminus N[u]}x_v^2\right). \notag  
	\end{eqnarray}
	Therefore $(\rho x_{\min})^2 \leq k\delta^2(1 - (n-\delta)x_{\min}^2).$
	Simplifying, we obtain the bound.
	
	Now, observe that if the equality is achieved, then $x_u = x_{\min}$ and  by the second inequality we have that if $v$ is a neighbor of $u$, then $x_v = x_u$. Even more, if $w$ is not a neighbor of $u$, then in the last inequality we have that $x_w = x_{\min}$. Thus, the eigenvector has all entries equals and therefore $\h$ must be regular.
\end{proof}

\begin{Teo} Let $\h$ be a connected $k$-graph with $n$ vertices and $m$ edges. If $ (\rho, \x) $ is its signless Laplacian eigenpair, then
	$$(kn\Delta - k^2m)x_{\min} \leq (k\Delta - \rho)\sqrt{n}.$$
	The equality is achieved if, and only if, $\h$ is regular.
\end{Teo}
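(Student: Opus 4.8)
The plan is to derive everything from the linear identity in Lemma~\ref{lem:rho-kd}, combined with the trivial identity $\sum_{v\in V}d(v)=km$ (each of the $m$ edges has $k$ vertices) and the two ``global'' estimates already established: $k\Delta-\rho\geq 0$ (Lemma~\ref{lem:graumaxmin}) and $\sum_{v\in V}x_v\leq\sqrt n$ (Corollary~\ref{coro:cota_para_soma}).

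First I would rewrite Lemma~\ref{lem:rho-kd}, $\rho\sum_{v\in V}x_v = k\sum_{v\in V}d(v)x_v$, by adding $k\Delta\sum_{v\in V}x_v$ on both sides and rearranging, to obtain
\[
(k\Delta - \rho)\sum_{v\in V}x_v \;=\; k\sum_{v\in V}\bigl(\Delta - d(v)\bigr)x_v .
\]
Since $\Delta-d(v)\geq 0$ and $x_v\geq x_{\min}$ for every $v\in V$, the right-hand side is bounded below by
\[
k\,x_{\min}\sum_{v\in V}\bigl(\Delta - d(v)\bigr)
\;=\; k\,x_{\min}\bigl(n\Delta - km\bigr)
\;=\;(kn\Delta - k^2 m)\,x_{\min}.
\]
On the left-hand side, $k\Delta-\rho\geq 0$ by Lemma~\ref{lem:graumaxmin}, so Corollary~\ref{coro:cota_para_soma} gives $(k\Delta-\rho)\sum_{v\in V}x_v\leq (k\Delta-\rho)\sqrt n$. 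Chaining these two bounds yields $(kn\Delta - k^2 m)\,x_{\min}\leq (k\Delta-\rho)\sqrt n$, which is the assertion.

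For the equality case, if $\h$ is regular then $\rho=k\Delta$ and $n\Delta=km$, so both sides are $0$. Conversely, if equality holds, then in particular $(k\Delta-\rho)\sum_{v\in V}x_v=(k\Delta-\rho)\sqrt n$, so either $\rho=k\Delta$, in which case $\h$ is regular by Lemma~\ref{teo:qregular}, or $\sum_{v\in V}x_v=\sqrt n$, in which case $\h$ is regular by Corollary~\ref{coro:cota_para_soma}. Either way $\h$ is regular.

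The proof is short and essentially mechanical; the only place that needs a little care is the equality analysis — one has to notice that equality can propagate through two independent ``channels'' (the factor $k\Delta-\rho$ vanishing, or the sum $\sum_v x_v$ attaining its maximum $\sqrt n$), each of which already forces regularity, and then check that regularity genuinely collapses both sides to zero rather than merely making them equal.
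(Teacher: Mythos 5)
Your proposal is correct and follows essentially the same route as the paper: rewrite Lemma~\ref{lem:rho-kd} as $(k\Delta-\rho)\sum_v x_v = k\sum_v(\Delta-d(v))x_v$, bound below by $x_{\min}(kn\Delta-k^2m)$, and bound $\sum_v x_v$ by $\sqrt{n}$ via Corollary~\ref{coro:cota_para_soma}. Your equality analysis is in fact slightly more careful than the paper's, since you note the second channel $\rho=k\Delta$ (handled by Lemma~\ref{teo:qregular}) in addition to $\sum_v x_v=\sqrt{n}$.
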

\begin{proof}
	From Lemma \ref{lem:rho-kd}, we have 
	$$(k\Delta - \rho)\sum_{v \in V}x_v = \sum_{v \in V}(k\Delta - kd(v))x_v \geq x_{\min}\sum_{v \in V}(k\Delta - kd(v)).$$
	
	Now, by Corollary \ref{coro:cota_para_soma}, we have
	$$x_{\min}(kn\Delta - k^2m) \leq (k\Delta - \rho)\sqrt{n}.$$
	
	The equality is true if, and only if, $\sum_{v \in V}x_v = \sqrt{n}$, that is only when $\h$ is regular.
\end{proof}

We observe that the parameter $\gamma(\h)$ can be used to measure the irregularity of the hypergraph $\h$, because $\gamma(\h) = 1$ if, and only if, the hypergraph is regular.

\begin{Teo}
	Let $\h$ be a connected $k$-graph. If $ (\rho, \x) $ is its signless Laplacian eigenpair, then
	$$\gamma(\h) \geq \frac{\left(\rho - k\delta\right)\left(\Delta - d\right)}{\left(k\Delta - \rho\right)  \left(d-\delta\right)}.$$
	The equality is achieved if, and only if, $\h$ is regular or semi-regular where if $d(v) = \Delta$, then $x_v = x_{\max}$ and if $d(u) = \delta$, then $x_u = x_{\min}$. 
\end{Teo}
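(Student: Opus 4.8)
The plan is to squeeze the single identity of Lemma \ref{lem:rho-kd}, namely $\rho\sum_{v\in V}x_v=k\sum_{v\in V}d(v)x_v$, from both sides: it yields one lower estimate for the quantity $S:=\sum_{v\in V}x_v$ in terms of $x_{\min}$ and one upper estimate in terms of $x_{\max}$, and chaining the two produces the claimed bound on $\gamma(\h)$. Concretely, adding $k\Delta\,S$ to (a rescaling of) the identity and, separately, subtracting $k\delta\,S$, one rewrites Lemma \ref{lem:rho-kd} as the two identities
\begin{equation*}
\sum_{v\in V}k\bigl(\Delta-d(v)\bigr)x_v=(k\Delta-\rho)S
\quad\text{and}\quad
\sum_{v\in V}k\bigl(d(v)-\delta\bigr)x_v=(\rho-k\delta)S .
\end{equation*}

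Next I would estimate the two left-hand sides. In the first identity every coefficient $k(\Delta-d(v))$ is nonnegative, so replacing each $x_v$ by $x_{\min}$ can only decrease the sum; since $\sum_{v}(\Delta-d(v))=n(\Delta-d)$, this gives $kn(\Delta-d)\,x_{\min}\le(k\Delta-\rho)S$. Symmetrically, the coefficients $k(d(v)-\delta)$ in the second identity are nonnegative, so replacing each $x_v$ by $x_{\max}$ can only increase the sum, and $\sum_{v}(d(v)-\delta)=n(d-\delta)$ yields $(\rho-k\delta)S\le kn(d-\delta)\,x_{\max}$.

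To combine these I first note that if $\h$ is not regular then all four numbers $k\Delta-\rho$, $\rho-k\delta$, $\Delta-d$, $d-\delta$ are strictly positive: Lemma \ref{lem:graumaxmin} gives $kd\le\rho\le k\Delta$, Lemma \ref{teo:qregular} upgrades $\rho=k\Delta$ to forcing regularity, $\rho\ge kd>k\delta$ because a non-constant degree sequence has $\delta<d<\Delta$, and likewise $\Delta-d,\,d-\delta>0$ for the same reason. (If $\h$ is regular, the fraction in the statement is the indeterminate $0/0$ and $\gamma(\h)=1$ by Lemma \ref{teo:qregular}, so that case is simply recorded on its own.) Then dividing the two estimates by $k\Delta-\rho$ and $\rho-k\delta$ respectively, cancelling the common positive factor $kn$, and using $S\ge 1>0$ from Corollary \ref{coro:cota_para_soma}, one is left with $\frac{(\Delta-d)x_{\min}}{k\Delta-\rho}\le\frac{(d-\delta)x_{\max}}{\rho-k\delta}$, which rearranges to exactly $\gamma(\h)\ge\frac{(\rho-k\delta)(\Delta-d)}{(k\Delta-\rho)(d-\delta)}$.

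For the equality case, equality in the final bound forces equality in both of the estimates above. Equality in $\sum_v k(\Delta-d(v))x_v\ge kn(\Delta-d)x_{\min}$ means each term $k(\Delta-d(v))(x_v-x_{\min})$ vanishes, i.e.\ $x_v=x_{\min}$ whenever $d(v)<\Delta$; and equality in the other estimate means $x_v=x_{\max}$ whenever $d(v)>\delta$. If $\h$ is not regular then $x_{\min}<x_{\max}$, so no vertex can have $\delta<d(v)<\Delta$; hence $\h$ is ``semi-regular'' with every degree equal to $\delta$ or $\Delta$, and the two conditions pin the entries down: $d(v)=\Delta\Rightarrow x_v=x_{\max}$ and $d(v)=\delta\Rightarrow x_v=x_{\min}$, as stated. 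The converse is obtained by running the two estimates backwards on such a hypergraph, where every summand is individually tight. I expect the inequality itself to be a short computation once one sees that Lemma \ref{lem:rho-kd} should be split into an ``$x_{\min}$-half'' and an ``$x_{\max}$-half''; the only point requiring care is the equality analysis, in particular ruling out vertices of intermediate degree and handling the degenerate regular case separately.
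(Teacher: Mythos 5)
Your proof is correct and takes essentially the same route as the paper's: both split the identity of Lemma \ref{lem:rho-kd} into the $(k\Delta-\rho)$- and $(\rho-k\delta)$-forms, bound the sums below by $x_{\min}$ and above by $x_{\max}$, and combine the two estimates (the paper multiplies the two inequalities where you chain and divide, which is the same computation), with the same equality analysis. Your extra care about the strict positivity of the factors in the non-regular case and the degenerate $0/0$ regular case merely makes explicit what the paper leaves implicit.
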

\begin{proof}
	By Lemma \ref{lem:rho-kd} we know that 
	\begin{equation}(k\Delta - \rho)\sum_{u \in V}x_u = k\sum_{u \in V}(\Delta - d(u))x_u \geq kn(\Delta - d)x_{\min}.\label{eq:teoG1}\end{equation}
	Similarly, we have
	\begin{equation}(\rho - k\delta)\sum_{u \in V}x_u = k\sum_{u \in V}(d(u) - \delta)x_u \leq kn(d - \delta)x_{\max}.\label{eq:teoG2}\end{equation}
	Multiplying (\ref{eq:teoG1}) and (\ref{eq:teoG2}), we conclude that
	$$\left( (k\Delta - \rho)\sum_{u \in V}x_u\right) \left( kn(d - \delta)x_{\max}\right) \geq \left((\rho - k\delta)\sum_{u \in V}x_u \right) \left(kn(\Delta - d)x_{\min} \right).$$
	Simplifying the above inequality, we obtain the bound.
	
	Observe that, the equality is achieved if, and only if, it holds in (\ref{eq:teoG1}) and (\ref{eq:teoG2}). Thus, all vertices that has no maximum degree must have minimum value in signless Laplacian vector and  all vertices that has no minimum degree must have maximum value in signless Laplacian vector. Therefore	$\h$ must be regular or semi-regular where if $d(v) = \Delta$, then $x_v = x_{\max}$ and if $d(u) = \delta$, then $x_u = x_{\min}$. 
\end{proof}

\begin{Teo}\label{TeoDd}
	Let $\h$ be a connected $k$-graph. If $(\rho, \x)$ is its signless Laplacian eigenpair, then
	\[\gamma(\h)\geq \max\left\lbrace
	\frac{k\Delta}{\rho},
	\frac{\rho}{k\delta}\right\rbrace \geq \sqrt{\frac{\Delta}{\delta}}.\]
	
	The equality is achieved if, and only if, $\h$ is regular.
\end{Teo}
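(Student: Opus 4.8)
The plan is to prove the two one-sided bounds $\gamma(\h)\ge k\Delta/\rho$ and $\gamma(\h)\ge \rho/(k\delta)$ independently, each from a single eigenvalue estimate at a vertex of extremal degree, and then to obtain the bound by $\sqrt{\Delta/\delta}$ from the elementary fact that the maximum of two positive reals is at least their geometric mean.

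For the first inequality, I would choose a vertex $w$ with $d(w)=\Delta$. The identity $\rho x_w=\sum_{e\in E_{[w]}}\x(e)$ recalled in Section~\ref{sec:pre}, together with the fact that each edge has exactly $k$ vertices, each with value at least $x_{\min}$, yields $\rho x_w\ge k\Delta x_{\min}$. Since $x_{\max}\ge x_w$ and $\rho,x_{\min}>0$ (Perron--Frobenius applies as $\h$ is connected), this rearranges to $\gamma(\h)=x_{\max}/x_{\min}\ge k\Delta/\rho$. The second inequality is the mirror image: taking $w$ with $d(w)=\delta$ gives $\rho x_w=\sum_{e\in E_{[w]}}\x(e)\le k\delta x_{\max}$, and $x_{\min}\le x_w$ then gives $\rho x_{\min}\le k\delta x_{\max}$, i.e.\ $\gamma(\h)\ge \rho/(k\delta)$. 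Hence $\gamma(\h)\ge\max\{k\Delta/\rho,\,\rho/(k\delta)\}\ge\sqrt{(k\Delta/\rho)(\rho/(k\delta))}=\sqrt{\Delta/\delta}$.

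For the equality case, if $\h$ is regular then $\Delta=\delta$ and $\rho=k\Delta$ by Lemma~\ref{teo:qregular}, so all three quantities equal $1=\gamma(\h)$. Conversely, assume the whole chain consists of equalities. Since $\gamma(\h)\ge a:=k\Delta/\rho$, $\gamma(\h)\ge b:=\rho/(k\delta)$ and $\gamma(\h)^2=ab$, we must have $\gamma(\h)=a$, so equality holds in the first bound: $\rho x_{\max}=\rho x_w=k\Delta x_{\min}$ for a vertex $w$ of degree $\Delta$. As $\rho x_w=\sum_{e\in E_{[w]}}\x(e)$ is a sum of $\Delta$ terms each at least $kx_{\min}$, the equality forces $\x(e)=kx_{\min}$ for every $e\in E_{[w]}$, hence $x_v=x_{\min}$ for every vertex $v$ contained in such an edge; in particular $x_w=x_{\min}$. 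But $x_w=x_{\max}$, so $x_{\max}=x_{\min}$, the signless Laplacian vector is constant, and Lemma~\ref{teo:qregular}(d) shows $\h$ is regular.

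All computations here are routine; the only delicate point is the equality analysis, where one must use that $\gamma(\h)=\max\{k\Delta/\rho,\rho/(k\delta)\}=\sqrt{\Delta/\delta}$ forces the two arguments of the maximum to coincide, which is precisely what allows equality to be pushed back into the vertex estimate.
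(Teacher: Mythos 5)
Your proposal is correct and follows essentially the same route as the paper: the same eigenvalue estimate at a maximum-degree and a minimum-degree vertex, the same geometric-mean observation, and the same equality analysis forcing $x_{\max}=x_u=x_{\min}$ and hence regularity via Lemma \ref{teo:qregular}. Your treatment of the equality case is in fact slightly more explicit than the paper's about why the chained equalities force equality in the vertex estimate, but the underlying argument is identical.
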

\begin{proof} Let $u,v \in V$ be vertices, such that $d(u) = \Delta$ and $d(v) = \delta$,
	so we have
	\[\rho x_{\max}\geq \rho x_u =  \sum_{e\in
		E_{[u]}}\x(e)\geq \sum_{e\in E_{[u]}}kx_{\min}
	= k\Delta x_{\min}  \Rightarrow
	\frac{x_{\max}}{x_{\min}}\geq \frac{k\Delta}{\rho}.\]	
	\[\rho x_{\min}\leq \rho x_v =  \sum_{e\in E_{[v]}}\x(e)\leq \sum_{e\in E_{[v]}}kx_{\max} = k\delta x_{\max} \Rightarrow \frac{x_{\max}}{x_{\min}}\geq \frac{\rho}{k\delta}.\]

	For second inequality, we note that $\sqrt{\frac{\Delta}{\delta}}$ is the geometric mean between $ \frac{k\Delta}{\rho}$ and $\frac{\rho}{k\delta}$.
	
	Now, to achieve the equality the equalities bellow must be true
	\[\rho x_{\max} = \rho x_u\; \Leftrightarrow\; x_u = x_{\max}\; \textrm{ and } \; \sum_{e \in E_{[u]}}\x(e)=\Delta kx_{\min}\;\Leftrightarrow\;  x_p = x_{\min},\; \forall p \in N[u].\]
	That is, $x_{\max} = x_u = x_{\min}$, therefore $\h$ is regular. 
\end{proof}

\begin{Teo}
	Let $\h$ be a connected $k$-graph with $n$ vertices. If $\x$ is its signless Laplacian vector, then
	\[x_{\max} - x_{\min} \geq \frac{\sqrt{\Delta} - \sqrt{\delta}}{\sqrt{n\Delta}}.\]
	The equality is achieved if, and only if, $\h$ is regular.
\end{Teo}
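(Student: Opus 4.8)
The plan is to obtain the inequality by combining two facts already available in the paper: the elementary bound $x_{\max}\ge \tfrac{1}{\sqrt n}$ recorded in the Remark following the definition of $x_{\min},x_{\max}$ (a direct consequence of Lemma \ref{teo:qregular}), and the ratio estimate $\gamma(\h)=\tfrac{x_{\max}}{x_{\min}}\ge\sqrt{\tfrac{\Delta}{\delta}}$ from Theorem \ref{TeoDd}. No new spectral input is needed; the whole argument is a short algebraic manipulation of these two.

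Concretely, I would first write the difference in multiplicative form,
\[
x_{\max}-x_{\min}=x_{\max}\left(1-\frac{x_{\min}}{x_{\max}}\right)=x_{\max}\left(1-\frac{1}{\gamma(\h)}\right).
\]
Since $\gamma(\h)\ge\sqrt{\Delta/\delta}$ gives $\tfrac{1}{\gamma(\h)}\le\sqrt{\delta/\Delta}$, this yields $x_{\max}-x_{\min}\ge x_{\max}\bigl(1-\sqrt{\delta/\Delta}\bigr)$. Because $\Delta\ge\delta$, the factor $1-\sqrt{\delta/\Delta}$ is nonnegative, so I may now substitute the lower bound $x_{\max}\ge\tfrac{1}{\sqrt n}$ to get
\[
x_{\max}-x_{\min}\ \ge\ \frac{1}{\sqrt n}\left(1-\sqrt{\frac{\delta}{\Delta}}\right)\ =\ \frac{\sqrt{\Delta}-\sqrt{\delta}}{\sqrt{n\Delta}},
\]
which is exactly the claimed bound. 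The one point that needs a little care is the order of the two substitutions: one must check the sign of $1-\sqrt{\delta/\Delta}$ before replacing $x_{\max}$ by its lower bound.

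For the equality case: if $\h$ is regular, then $\Delta=\delta$ and $\x=\left(\tfrac{1}{\sqrt n},\ldots,\tfrac{1}{\sqrt n}\right)$ by Lemma \ref{teo:qregular}, so both sides are $0$. Conversely, suppose equality holds but $\h$ is not regular; then $\Delta>\delta$, hence $1-\sqrt{\delta/\Delta}>0$, and equality in the chain above forces both $x_{\max}=\tfrac{1}{\sqrt n}$ and $\gamma(\h)=\sqrt{\Delta/\delta}$. But by Theorem \ref{TeoDd} the equality $\gamma(\h)=\sqrt{\Delta/\delta}$ already implies that $\h$ is regular, contradicting $\Delta>\delta$. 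Hence equality holds precisely when $\h$ is regular. I expect no real obstacle here; the only subtlety is to invoke the equality characterization of Theorem \ref{TeoDd} rather than re-deriving it.
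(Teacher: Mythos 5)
Your proposal is correct and follows essentially the same route as the paper: rewrite $x_{\max}-x_{\min}\ge\bigl(1-\sqrt{\delta/\Delta}\bigr)x_{\max}$ via the bound $\gamma(\h)\ge\sqrt{\Delta/\delta}$ from Theorem \ref{TeoDd}, then apply $x_{\max}\ge\frac{1}{\sqrt{n}}$. The only (harmless) difference is in the equality case, where you invoke the equality characterization of Theorem \ref{TeoDd}, while the paper argues directly from $x_{\max}=\frac{1}{\sqrt{n}}$ holding only for regular hypergraphs; both settle it.
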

\begin{proof} Firstly, we observe that
	\[\frac{x_{\max}}{x_{\min}}\geq \sqrt{\frac{\Delta}{\delta}} \Rightarrow x_{\min} \leq\sqrt{
		\frac{\delta}{\Delta}} x_{\max}.\]
	Multiplying the inequality by $ -1 $ and adding $ x_ {\max} $  to both sides,
	we have
	\[x_{\max} - x_{\min} \geq \left(1 - \sqrt{\frac{\delta}{\Delta}}\right) x_{\max} =
	\frac{\sqrt{\Delta} - \sqrt{\delta}}{\sqrt{\Delta}}x_{\max}. \] We observe that
	$x_{\max} \geq \frac{1}{\sqrt{n}}$, so we conclude that
	\[x_{\max} - x_{\min} \geq \frac{\sqrt{\Delta} - \sqrt{\delta}}{\sqrt{n\Delta}}.\]
	Equality occurs if, and only if, $x_{\max} = \frac{1}{\sqrt{n}}$, that is, only when $\h$ is regular.
\end{proof}

\begin{Teo}\label{tminmax}	Let $\h$ be a connected $k$-graph with $n$
	vertices. If $\x$ is its signless Laplacian vector, then	
	\begin{itemize}	
		\item[(a)] $x_{\max} \geq  \frac{\Delta}{\sqrt{\delta^2 +(n-1)\Delta^2}}$, equality holds if, and only if, the hypergraph $\h$ is regular.
		
		\item[(b)] $x_{\min} \leq  \frac{\delta}{\sqrt{\Delta^2+(n-1)\delta^2}}$, equality holds if, and only if, the hypergraph $\h$ is regular.
	\end{itemize}
\end{Teo}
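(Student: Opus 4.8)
The plan is to prove (a) and (b) in parallel from two ingredients: the normalization $\sum_{v\in V}x_v^{2}=1$ of the signless Laplacian vector, and the sharp ratio bound $\gamma(\h)=x_{\max}/x_{\min}\ge \Delta/\delta$.

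First I would extract the two elementary consequences of $\|\x\|=1$. Fixing a vertex $p$ with $x_p=x_{\max}$ and bounding the other $n-1$ coordinates below by $x_{\min}$ gives $1\ge x_{\max}^{2}+(n-1)x_{\min}^{2}$; fixing a vertex $q$ with $x_q=x_{\min}$ and bounding the other coordinates above by $x_{\max}$ gives $1\le (n-1)x_{\max}^{2}+x_{\min}^{2}$. Granting $\delta\,x_{\max}\ge \Delta\,x_{\min}$, part (b) follows by inserting $x_{\max}\ge (\Delta/\delta)x_{\min}$ into the first inequality, which gives $1\ge \big((\Delta/\delta)^{2}+n-1\big)x_{\min}^{2}$ and hence the stated bound; part (a) follows symmetrically by inserting $x_{\min}\le (\delta/\Delta)x_{\max}$ into the second inequality and solving for $x_{\max}$. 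For the equality discussion one retraces these steps: equality forces the ratio bound to be tight ($\gamma(\h)=\Delta/\delta$) and the chosen normalization inequality to be an equality (so $\x$ has all but one coordinate equal); feeding this into the eigenvalue equation $\rho x_v=\sum_{e\in E_{[v]}}\x(e)$ at the exceptional vertex and its neighbours, together with Lemma \ref{teo:qregular}, then pins down the extremal hypergraphs and recovers $\x=(1/\sqrt n,\dots,1/\sqrt n)$ in the regular case.

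The real content, and the step I expect to be the main obstacle, is the ratio inequality $\gamma(\h)\ge \Delta/\delta$, which is strictly stronger than the bound $\gamma(\h)\ge \sqrt{\Delta/\delta}$ of Theorem \ref{TeoDd}. The natural attempt is to imitate the proof of Theorem \ref{TeoDd}: at a minimum-degree vertex $u$ one has $(\rho-\delta)x_u=\sum_{w\in N(u)}d(\{u,w\})x_w$, and since $\sum_{w\in N(u)}d(\{u,w\})=(k-1)\delta$ by Lemma \ref{lema:km}, this yields $\gamma(\h)\ge (\rho-\delta)/((k-1)\delta)$; the symmetric computation at a maximum-degree vertex gives $\gamma(\h)\ge (k-1)\Delta/(\rho-\Delta)$. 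Multiplying these two estimates only reproduces $\sqrt{\Delta/\delta}$, because the Perron--Frobenius bounds $k\delta\le\rho\le k\Delta$ (Lemma \ref{lem:graumaxmin}) are too crude to separate the factors; closing the gap to $\Delta/\delta$ appears to need a genuinely sharper lower estimate for $\rho$ (of the type $\rho\ge (k-1)\Delta+\delta$, which is tight for $k$-uniform stars), or a more careful argument tracking the whole neighbourhood of the extreme-degree vertices. If only the weaker $\gamma(\h)\ge \sqrt{\Delta/\delta}$ is used, the same substitutions give the weaker bounds $x_{\max}\ge \sqrt{\Delta/((n-1)\Delta+\delta)}$ and $x_{\min}\le \sqrt{\delta/(\Delta+(n-1)\delta)}$, so it is precisely this ratio inequality that carries the theorem.
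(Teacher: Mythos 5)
Your reduction is exactly the one in the paper: from $\sum_{v}x_v^2=1$ you extract $1\ge x_{\max}^2+(n-1)x_{\min}^2$ and $1\le x_{\min}^2+(n-1)x_{\max}^2$, and both stated bounds then drop out of the single inequality $\delta x_{\max}\ge \Delta x_{\min}$, i.e. $\gamma(\h)\ge \Delta/\delta$. The problem is that you only ``grant'' this inequality, and, as you yourself observe, Theorem \ref{TeoDd} supplies only $\gamma(\h)\ge\sqrt{\Delta/\delta}$, which yields the weaker bounds $x_{\max}\ge\sqrt{\Delta/((n-1)\Delta+\delta)}$ and $x_{\min}\le\sqrt{\delta/(\Delta+(n-1)\delta)}$ rather than the ones claimed. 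So as submitted your proposal does not prove the theorem. You should know, however, that you have put your finger on the weak point of the paper's own argument: the step $(\gamma^{-2}+n-1)x_{\max}^2\le\big((\delta/\Delta)^2+n-1\big)x_{\max}^2$ in the printed proof is precisely the unproved inequality $\gamma(\h)\ge\Delta/\delta$, and the implicit appeal to Theorem \ref{TeoDd} does not deliver it.

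The missing inequality can in fact be closed from the two estimates you already derived, at least for $k=2$. At a vertex of minimum degree, $(\rho-\delta)x_{\min}\le(k-1)\delta\, x_{\max}$, and at a vertex of maximum degree, $(\rho-\Delta)x_{\max}\ge(k-1)\Delta\, x_{\min}$; the first gives $\gamma(\h)\ge\Delta/\delta$ whenever $\rho\ge(k-1)\Delta+\delta$, and the second gives it whenever $\rho\le\Delta+(k-1)\delta$. For $k=2$ the two thresholds coincide at $\Delta+\delta$, so one of the two cases always applies and $\gamma(\g)\ge\Delta/\delta$ holds for every connected graph with no extra information about $\rho$. For $k\ge3$, exactly as you note, there remains the uncovered window $\Delta+(k-1)\delta<\rho<(k-1)\Delta+\delta$, and neither your proposal nor the paper closes it; completing the proof for uniform hypergraphs requires either establishing $\gamma(\h)\ge\Delta/\delta$ by a finer argument (or a sharper spectral estimate valid in that window), or retreating to the weaker bounds you wrote down. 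Your sketch of the equality analysis is fine once the inequality chain is in place, since equality forces both the normalization estimate and the ratio bound to be tight, which pins down the constant eigenvector and hence regularity via Lemma \ref{teo:qregular}.
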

\begin{proof}
	
	To prove part (a), we observe that \[1 = \sum_{u \in V} x_v^2 \leq x_{\min}^2
	+ (n-1)x_{\max}^2 =(\gamma^{-2} +n-1)x_{\max}^2 \leq
	\left(\left(\frac{\delta}{\Delta} \right)^2+n-1
	\right)x_{\max}^2. \]	
	Now observe that equality in this theorem occurs only if the equality from
	Theorem \ref{TeoDd} occurs as well. Therefore, the equality is achieved if, and only if, $\h$ is regular.
	
	Similarly, we prove part (b).
\end{proof}

\section{A Measure for the irregularity of hyperedges}\label{sec:edges}
In this section, we will study the parameter $\Gamma$ that are still little explored, even for other graph matrices.
We believe that the parameter $\Gamma$ plays the role for the edges of the hypergraph, played by $\gamma$ for vertices, which can be used as a measure of irregularity.

\begin{Def}
	Let $\h$ be a uniform hypergraph and $\x=(x_v)$ its signless Laplacian vector. We define
	\[\x(\min) = \min_{e \in E(\h)}\{\x(e)\}, \quad \x(\max) = \max_{e \in E(\h)}\{\x(e)\},\quad \Gamma(\h) = \frac{\x(\max)}{\x(\min)}.\]			
\end{Def}

\begin{Teo}\label{Teominrhormax}
	Let $ \h $ be a connected $k$-graph with $n$ vertices and $m$ edges. If $(\rho, \x)$ is its
	signless Laplacian eigenpair, then
	\[\x(\min) \leq \sqrt{\frac{\rho(\h)}{m}} \leq \x(\max).\]	
	If $\h$ is regular, then the equalities are achieved.
\end{Teo}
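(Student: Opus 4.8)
The plan is to exploit the identity from Lemma~\ref{Lem:xqx}, namely $\x^T\Q\x = \sum_{e\in E}[\x(e)]^2$, together with the fact that $\x$ is the principal eigenvector, so $\x^T\Q\x = \rho\,\x^T\x = \rho$ (the eigenvector is normalized). Thus $\sum_{e\in E}[\x(e)]^2 = \rho$. This is an average over the $m$ edges: the mean of the quantities $[\x(e)]^2$ is exactly $\rho/m$. The two-sided bound then follows immediately from the elementary observation that the minimum of a finite collection of nonnegative numbers is at most their average, which is at most their maximum.

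Concretely, first I would write
\[
m\,[\x(\min)]^2 \;\leq\; \sum_{e\in E}[\x(e)]^2 \;=\; \x^T\Q\x \;=\; \rho \;=\; \sum_{e\in E}[\x(e)]^2 \;\leq\; m\,[\x(\max)]^2,
\]
where the first inequality holds because $[\x(e)]^2 \geq [\x(\min)]^2$ for every edge $e$ (all entries of $\x$ are positive by Perron--Frobenius, so every $\x(e) > 0$ and the squares preserve the ordering), and the last inequality holds symmetrically. Dividing through by $m$ and taking square roots (everything in sight is positive) gives $\x(\min) \leq \sqrt{\rho/m} \leq \x(\max)$, as desired.

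For the equality clause: if $\h$ is regular, then by Lemma~\ref{teo:qregular}(d) the signless Laplacian vector is $\x = (1/\sqrt{n},\ldots,1/\sqrt{n})$, so $\x(e) = k/\sqrt{n}$ is the same constant for every edge $e$; hence $\x(\min) = \x(\max) = k/\sqrt{n}$, and the common value $[\x(e)]^2 = k^2/n$ summed over $m$ edges must equal $\rho$, which forces $\sqrt{\rho/m} = k/\sqrt{n}$ and the two inequalities collapse to equalities. (One could also note $\rho = k\Delta = kd(\h)$ in the regular case via Lemma~\ref{teo:qregular}, and that $m = nd(\h)/k$ by a degree count, giving $\rho/m = k^2/n$ directly.)

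There is essentially no obstacle here; the only point requiring a word of care is the positivity of all the $\x(e)$, which is needed both to justify squaring preserves order and to take square roots — but this is immediate from the Perron--Frobenius Theorem since $\Q$ is irreducible for connected $\h$. I note that the statement as given claims only the easy direction of the equality case (``if $\h$ is regular, then the equalities are achieved''); a full characterization would presumably require showing, for instance, that equality in both bounds forces all $[\x(e)]^2$ to be equal, which need not by itself imply regularity, so I would not attempt to strengthen the claim beyond what is stated.
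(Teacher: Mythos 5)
Your proposal is correct and follows essentially the same route as the paper: both rely on Lemma~\ref{Lem:xqx} together with $\rho = \x^T\Q\x$ for the normalized principal eigenvector, bound the resulting sum $\sum_{e\in E}[\x(e)]^2$ by $m[\x(\min)]^2$ and $m[\x(\max)]^2$, and settle the regular case via Lemma~\ref{teo:qregular} with $\rho = kd(\h) = k^2m/n$. No further comment is needed.
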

\begin{proof}
	By Lemma \ref{Lem:xqx}, we have that	
	\[\rho(\h) = \x^T\Q\x = \sum_{e \in E}(\x(e))^2 \leq \sum_{e \in E}(\x(\max))^2 = m(\x(\max))^2. \]
	
	The proof of the other inequality is analogous.
	
	If $\h$ is regular by Lemma \ref{teo:qregular}, we have $\x(\min) = \frac{k}{\sqrt{n}} = \x(\max)$ and $\rho = kd(\h) = \frac{k^2m}{n}$. Therefore the equalities are achieved.
\end{proof}

\begin{Cor} Let $\h$ be a connected $k$-graph with $n$ vertices. If $ (\rho, \x) $ is its signless Laplacian eigenpair, then	$$x_{\max}\geq \sqrt{\frac{\rho}{k^2m}}.$$ The equality is achieved if, and only if, $\h$ is regular.	
\end{Cor}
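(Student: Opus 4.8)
The plan is to read this off from Theorem~\ref{Teominrhormax}. Let $e^{\ast}$ be an edge with $\x(e^{\ast})=\x(\max)$. Since $e^{\ast}$ consists of exactly $k$ vertices and every entry of $\x$ is at most $x_{\max}$, we have $\x(\max)=\x(e^{\ast})\le k x_{\max}$. Combining this with the left inequality $\sqrt{\rho/m}\le\x(\max)$ from Theorem~\ref{Teominrhormax} gives $\sqrt{\rho/m}\le k x_{\max}$, and isolating $x_{\max}$ yields $x_{\max}\ge\sqrt{\rho/(k^2m)}$. So the inequality itself is immediate; the substance is in the equality case.

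First suppose $\h$ is regular. By Lemma~\ref{teo:qregular} we have $x_{\max}=1/\sqrt{n}$ and $\rho=kd(\h)$. Since $\h$ is $k$-uniform, $\sum_{v\in V}d(v)=km$, hence $d(\h)=km/n$ and $\rho=k^2m/n$. Therefore $\sqrt{\rho/(k^2m)}=\sqrt{1/n}=x_{\max}$, so equality holds.

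Conversely, assume equality holds. Then both inequalities used in the proof of the bound must be equalities: $\x(\max)=k x_{\max}$ and $\sqrt{\rho/m}=\x(\max)$. Recall from the proof of Theorem~\ref{Teominrhormax} that $\rho=\sum_{e\in E}(\x(e))^{2}\le m(\x(\max))^{2}$; equality here forces $\x(e)=\x(\max)=k x_{\max}$ for \emph{every} edge $e$. Also $\x(e^{\ast})=k x_{\max}$ together with $x_{v}\le x_{\max}$ for all $v\in e^{\ast}$ forces $x_{v}=x_{\max}$ for every $v\in e^{\ast}$. Now invoke connectedness: if a vertex $u$ with $x_{u}=x_{\max}$ lies in an edge $e$, then $\x(e)=k x_{\max}$ forces every vertex of $e$ to have value $x_{\max}$; walking along edges starting from $e^{\ast}$, we conclude $x_{v}=x_{\max}$ for all $v\in V$. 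Thus $\x$ is the constant vector, and by Lemma~\ref{teo:qregular}(d) the hypergraph $\h$ is regular.

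The algebra isolating $x_{\max}$ is routine. The only delicate point is the converse of the equality characterization: Theorem~\ref{Teominrhormax} asserts only the ``if'' direction for its own equalities, so we cannot quote it for the reverse implication and must instead run the propagation argument above, using connectedness to turn ``$\x(e)$ constant over edges'' plus one edge of all-$x_{\max}$ vertices into ``$\x$ constant''. I expect this step to be the main (mild) obstacle.
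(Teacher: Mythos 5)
Your proof is correct and follows essentially the same route as the paper: chain $kx_{\max}\geq \x(\max)\geq \sqrt{\rho/m}$ using Theorem \ref{Teominrhormax}, then for equality extract from that theorem's proof that $\x(e)=\x(\max)=kx_{\max}$ for every edge, forcing $x_v=x_{\max}$ on all vertices and hence regularity via Lemma \ref{teo:qregular}. Your explicit connectedness/propagation step and the direct check of the regular case are just slightly more detailed versions of what the paper leaves implicit.
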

\begin{proof}
	We just notice that	$kx_{\max} \geq \x(\max) \geq \sqrt{\frac{\rho}{m}}.$	
	
	In proof of Theorem \ref{Teominrhormax}, we see that the equality holds if, and only if, $\x(\max) = \x(e)$ for all $e \in E$. Therefore, the equality in this corollary is achieved if, and only if, $kx_{\max}  = \x(e)$ for all $e \in E$, i.e. $x_u = x_{\max}$ for all $u \in V$, that is only when $\h$ is regular. 
\end{proof}
\begin{Obs}
	It is also possible to obtain the following inequality  $x_{\min}\leq \sqrt{\frac{\rho}{k^2m}}$,
	but it is not interesting because $x_{\min}\leq\frac{1}{\sqrt{n}}\leq \sqrt{\frac{\rho}{k^2m}}$.
\end{Obs}

\begin{Teo}\label{lemapeso}	
	Let $\h$ be a connected $k$-graph. If $(\rho, \x)$ is its signless Laplacian
	eigenpair, then
	\begin{itemize}
		\item[(a)] $kx_{\min}\leq \x(\min) \leq \frac{\rho}{\delta}x_{\min},$
		\item[(b)] $\frac{\rho}{\Delta}x_{\max}\leq \x(\max) \leq kx_{\max}.$
	\end{itemize}
	If $\h$ is regular, then the equalities are achieved.
\end{Teo}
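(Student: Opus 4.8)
The plan is to bound each individual edge-sum $\x(e)$ between a multiple of $x_{\min}$ and a multiple of $x_{\max}$, and then separately connect $\x(\min)$ and $\x(\max)$ to the spectral radius via the eigenvalue equation $\rho x_u = \sum_{e \in E_{[u]}}\x(e)$. For part (a), the left inequality $kx_{\min} \leq \x(\min)$ is immediate: any edge $e=\{v_1,\ldots,v_k\}$ has $\x(e) = x_{v_1}+\cdots+x_{v_k} \geq kx_{\min}$, so in particular the minimizing edge satisfies this. For the right inequality, I would pick a vertex $u$ achieving $x_u = x_{\min}$ and use $\rho x_{\min} = \rho x_u = \sum_{e \in E_{[u]}}\x(e)$. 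Since every term in this sum is at least $\x(\min)$ and there are $d(u) \geq \delta$ of them — wait, I need the sum to be an \emph{upper} bound for $\x(\min)$, so instead I note that $u$ belongs to at least one edge $e_0$, and $\x(\min) \leq \x(e_0) \leq \sum_{e \in E_{[u]}}\x(e) = \rho x_u = \rho x_{\min}$; but this only gives $\x(\min) \leq \rho x_{\min}$, which is weaker than $\frac{\rho}{\delta}x_{\min}$ only if $\delta = 1$. So the correct route is: let $e^*$ be an edge achieving $\x(\min)$, and let $u \in e^*$; then $d(u) \x(\min) \leq \sum_{e \in E_{[u]}} \x(e) = \rho x_u$, hence $\x(\min) \leq \frac{\rho x_u}{d(u)} \leq \frac{\rho x_{\max}}{\delta}$ — but I want $x_{\min}$ on the right, not $x_{\max}$. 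The fix is to instead choose $u \in e^*$ to be a vertex of $e^*$ with $x_u = x_{\min}$; such a vertex need not exist, so the cleanest argument is: for any $u$, $\x(\min) \leq \frac{1}{d(u)}\sum_{e\in E_{[u]}}\x(e) = \frac{\rho x_u}{d(u)}$, and taking $u$ to be a vertex with $x_u = x_{\min}$ gives $\x(\min) \leq \frac{\rho x_{\min}}{d(u)} \leq \frac{\rho x_{\min}}{\delta}$.

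Part (b) is the mirror image. The right inequality $\x(\max) \leq kx_{\max}$ holds because any edge-sum of $k$ entries is at most $kx_{\max}$. For the left inequality, take a vertex $u$ with $x_u = x_{\max}$; then $\rho x_{\max} = \rho x_u = \sum_{e \in E_{[u]}}\x(e) \leq d(u)\,\x(\max) \leq \Delta\,\x(\max)$, which rearranges to $\frac{\rho}{\Delta}x_{\max} \leq \x(\max)$.

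For the equality claims when $\h$ is regular, I would invoke Lemma \ref{teo:qregular}: the signless Laplacian vector is $\x = (\frac{1}{\sqrt{n}},\ldots,\frac{1}{\sqrt{n}})$, so $x_{\min} = x_{\max} = \frac{1}{\sqrt{n}}$ and $\x(e) = \frac{k}{\sqrt{n}}$ for every edge, giving $\x(\min) = \x(\max) = \frac{k}{\sqrt{n}}$; moreover $\rho = kd = kr$ and $\delta = \Delta = r$, so $\frac{\rho}{\delta}x_{\min} = \frac{kr}{r}\cdot\frac{1}{\sqrt{n}} = \frac{k}{\sqrt{n}} = kx_{\min}$ and likewise for (b), so all four inequalities become equalities. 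The only genuine subtlety — and the step I would be most careful about — is the choice of vertex in each of the two nontrivial bounds: one must average over all edges through a well-chosen vertex (the one attaining the extreme eigenvector entry) rather than argue about a single edge, since an extremal edge for $\x(\cdot)$ need not contain an extremal vertex for $x$. Everything else is a one-line estimate.
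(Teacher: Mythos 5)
Your final argument is correct and is essentially the paper's own proof: the bounds $kx_{\min}\leq \x(e)\leq kx_{\max}$ for every edge, together with the eigenvalue equation $\rho x_u=\sum_{e\in E_{[u]}}\x(e)$ evaluated at a vertex $u$ attaining $x_{\min}$ (resp.\ $x_{\max}$) and the estimate $\delta\leq d(u)\leq\Delta$, are exactly the steps used there, and your treatment of the regular case (where $\rho=k\delta=k\Delta$ and the eigenvector is constant) matches as well. The only difference is cosmetic: after some visible backtracking you arrive at the same choice of vertex the paper makes from the start.
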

\begin{proof}
	For part (a) notice that, for every $v \in V$, we have $x_{\min} \leq x_v \leq
	x_{\max}$, thus given an edge $e = \{v_1,\cdots, v_k\} \in E$, we have
	\[\x(e) = x_{v_1}+\cdots +x_{v_k} \geq  kx_{\min} \quad \forall e \in E  \quad \Rightarrow \quad
	\x(\min)\geq kx_{\min}.\]
	Let $u \in V$ such that $x_u = x_{\min}$, so
	\[\rho x_{\min} = \sum_{e \in E_{[u]}}\x(e) \geq \sum_{e \in E_{[u]}}\x(\min) \geq
	\delta \x(\min).\]
	
	Similarly, we prove part (b).
	
	If $\h$ is regular, then $k\delta  = \rho = k\Delta$. Therefore, the equalities hold.
\end{proof}

\begin{Cor}\label{corggg}
	Let $\h$ be a connected $k$-graph. If $\x$ is its signless Laplacian vector, then \[\frac{\delta}{\Delta}\gamma(\h)\leq\Gamma(\h)\leq \gamma(\h).\]	
	If $\h$ is regular, then the equalities are achieved.
\end{Cor}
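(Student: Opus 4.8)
The statement is an immediate consequence of Theorem \ref{lemapeso}, so the plan is simply to combine the two chains of inequalities there in the right way. First I would recall that, by definition, $\Gamma(\h) = \x(\max)/\x(\min)$ and $\gamma(\h) = x_{\max}/x_{\min}$, and that all quantities involved are strictly positive since $\h$ is connected (Perron--Frobenius), so dividing inequalities is legitimate.

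For the upper bound, I would use the outer inequalities of Theorem \ref{lemapeso}: part (b) gives $\x(\max) \leq kx_{\max}$ and part (a) gives $\x(\min) \geq kx_{\min}$. Dividing, $\Gamma(\h) = \x(\max)/\x(\min) \leq (kx_{\max})/(kx_{\min}) = \gamma(\h)$. For the lower bound, I would instead use the inner inequalities: part (b) gives $\x(\max) \geq (\rho/\Delta)x_{\max}$ and part (a) gives $\x(\min) \leq (\rho/\delta)x_{\min}$, so
\[
\Gamma(\h) = \frac{\x(\max)}{\x(\min)} \geq \frac{(\rho/\Delta)x_{\max}}{(\rho/\delta)x_{\min}} = \frac{\delta}{\Delta}\cdot\frac{x_{\max}}{x_{\min}} = \frac{\delta}{\Delta}\gamma(\h),
\]
where the factor $\rho$ cancels. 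This establishes both inequalities.

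For the equality claim, I would note that if $\h$ is regular then $\delta = \Delta$, and by Lemma \ref{teo:qregular} the signless Laplacian vector is constant, whence $x_{\max} = x_{\min}$ (so $\gamma(\h) = 1$) and $\x(e)$ is the same for every edge (so $\Gamma(\h) = 1$); thus $\frac{\delta}{\Delta}\gamma(\h) = 1 = \Gamma(\h) = \gamma(\h)$ and both bounds are attained. There is essentially no obstacle here: the only thing to be slightly careful about is the direction of the inequalities when passing to quotients (numerator up, denominator down for an upper bound, and vice versa), and the fact that the common factors $k$ and $\rho$ must cancel cleanly, which they do.
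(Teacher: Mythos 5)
Your proposal is correct and follows essentially the same route as the paper: both bounds are obtained by dividing the appropriate inequalities of Theorem \ref{lemapeso}, and the equality case for regular hypergraphs is handled the same way ($\gamma=\Gamma=1$ and $\Delta=\delta$). No gaps.
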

\begin{proof} These inequalities follow from Theorem \ref{lemapeso}:
	\[\x(\max) \leq kx_{\max}, \quad \x(\min) \geq kx_{\min}
	\quad\Rightarrow\quad \Gamma(\h)\leq\gamma(\h).\]
	\[\x(\max)\geq \frac{\rho}{\Delta} x_{\max}, \quad \x(\min) \leq \frac{\rho}{\delta} x_{\min}\quad\Rightarrow\quad \Gamma(\h)\geq \frac{\delta}{\Delta}\gamma(\h).\]
	
	Observe that if $\h$ is regular, then $\gamma = 1$, $\Gamma = 1$  and
	$\Delta = \delta$. So the equalities hold.
\end{proof}

\begin{Obs}
	We notice that, compute the values of $x_{\max}$, $x_{\min}$ and $\gamma$ are easier than to obtain the parameters $\x(\max)$, $\x(\min)$ and $\Gamma$. Thus, Theorem \ref{lemapeso} and Corollary \ref{corggg} provide useful bounds for these new parameters. Moreover, an interpretation of these results suggests a strong relationship between the parameters for vertices and edges. That is, the information passed by each parameter set certainly is not the same, but in many cases it must be similar. This reinforces the idea that the values $\x(\max)$, $\x(\min)$ and $\Gamma$ play the role for the edges of the hypergraph, played by $x_{\max}$, $x_{\min}$ and $\gamma$ for vertices.
\end{Obs}

Obviously if $\h$ is a regular $k$-graph, then $\gamma = 1\; \Rightarrow\; x_{\max} =
x_{\min} \; \Rightarrow\; \x(\max) = \x(\min) \; \Rightarrow\; \Gamma =1$. But
it is not true that if $\Gamma = 1$ then $\h$ is regular. Thus the following
question naturally arise.

\begin{Que}
	For which hypergraphs $\h$ do we have $\Gamma(\h) = 1 $?
\end{Que}

We will answer this question in the next theorem.

\begin{Teo}\label{teo:caracG} Let $\h$ be a connected $k$-graph. $\Gamma(\h) = 1$ if, and only if, for each edge the sum of the degrees of its vertices is constant.
\end{Teo}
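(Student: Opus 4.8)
The plan is to prove both implications directly from the identity $\rho x_u = \sum_{e \in E_{[u]}}\x(e)$, valid for every vertex $u$ of a $k$-graph (established in Section~\ref{sec:pre}), together with the Perron--Frobenius theorem applied to the irreducible matrix $\Q(\h)$.

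For the forward direction, I would assume $\Gamma(\h)=1$, so that $\x(e)=c$ for a single constant $c>0$ and every edge $e$. Summing this over $E_{[u]}$ gives $\rho x_u = c\,d(u)$, i.e. $x_u = c\,d(u)/\rho$ for each vertex $u$. Substituting this expression back into $\x(e)=x_{v_1}+\cdots+x_{v_k}=c$ for an arbitrary edge $e=\{v_1,\dots,v_k\}$ yields $c = \tfrac{c}{\rho}\sum_{i=1}^{k}d(v_i)$, and since $c\neq 0$ this forces $\sum_{v\in e}d(v)=\rho$ for every edge. In particular the degree sum over each edge is constant (equal to the spectral radius).

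For the converse, suppose $\sum_{v\in e}d(v)=t$ for every edge $e$. The key step is to test the positive vector $\y=(d(v))_{v\in V}$ against $\Q$: one computes $(\Q\y)_u=\sum_{e\in E_{[u]}}\y(e)=\sum_{e\in E_{[u]}}\sum_{v\in e}d(v)=t\,d(u)=t\,y_u$, so $\y$ is an eigenvector of $\Q$ for the eigenvalue $t$. Since $\h$ is connected, $\Q$ is irreducible, and a strictly positive eigenvector of an irreducible nonnegative matrix is necessarily the Perron eigenvector; hence $t=\rho$ and $\x$ is a positive scalar multiple of $\y$, namely $x_v = d(v)/\sqrt{Z(\h)}$. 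Consequently $\x(e)=\tfrac{1}{\sqrt{Z(\h)}}\sum_{v\in e}d(v)=t/\sqrt{Z(\h)}$ is the same for every edge, so $\x(\max)=\x(\min)$ and $\Gamma(\h)=1$.

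The only nonroutine point is the observation in the converse that the hypothesis makes the degree vector itself an eigenvector of $\Q$; everything else is a short calculation plus an invocation of Perron--Frobenius. I would also note in passing that the argument pins down exactly which vector $\x$ is in the extremal case, $x_v=d(v)/\sqrt{Z(\h)}$, and that $\rho$ then equals the common edge-degree-sum, which may be worth recording as a remark.
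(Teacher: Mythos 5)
Your proof is correct and follows essentially the same route as the paper: the forward direction uses the eigen-equation $\rho x_u=\sum_{e\in E_{[u]}}\x(e)$ to show $x_u$ is proportional to $d(u)$ and hence the edge degree-sum equals $\rho$, and the converse observes that the degree vector is a positive eigenvector of $\Q$, which by Perron--Frobenius must be the principal one. The only differences are cosmetic (you keep the common value of $\x(e)$ as an abstract constant $c$ and normalize by $\sqrt{Z(\h)}$, whereas the paper identifies it as $\sqrt{\rho/m}$ via Theorem~\ref{Teominrhormax} and scales by $1/\sqrt{\rho m}$, which amounts to the same thing since $Z(\h)=\rho m$ in this situation).
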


\begin{proof} If $\Gamma(\h) = 1$ then $\x(\min) = \x(\max)$, so by Theorem
	\ref{Teominrhormax}, we have $\x(e) = \sqrt{\frac{\rho}{m}}$, for all $e=\{v_1,\ldots,v_k\}\in E$. For
	each $u \in V$, we have \[\rho x_u = \sum_{e \in E_{[u]}}\x(e) =
	d(u)\sqrt{\frac{\rho}{m}}\;\Rightarrow\; x_u = \frac{d(u)}{\sqrt{\rho m}}.\]
	Therefore, \[\sqrt{\frac{\rho}{m}} = \x(e) = \frac{d(v_1)+\cdots+d(v_k)}{\sqrt{\rho m}}
	\; \Rightarrow\; \rho = d(v_1)+\cdots +d(v_k),\;\forall
	e=\{v_1,\ldots,v_k\} \in E.\]
	That is, for any edge, the sum of the degrees of its vertices is always $\rho$.
	
	\vspace{0.2cm}
	
	Conversely, if $d(v_1)+\cdots+d(v_k) = D$ for all $e=\{v_1,\ldots,v_k\} \in E$, we define the vector $\x$ by $x_u = \frac{d(u)}{\sqrt{\rho m}}$ for all $u \in V$, and then
	\[\sum_{e \in E_{[u]}}\x(e) = \sum_{e \in E_{[u]}} \frac{d(v_1)+\cdots+d(v_k)}{\sqrt{\rho m}} = \sum_{e \in 		E_{[u]}} \frac{D}{\sqrt{\rho m}} =  d(u)\frac{D}{\sqrt{\rho m}} = D x_u.\]	
	Hence, $(\Q\x)_u = D x_u$ for all $u \in V$. That is,
	$(D,\x)$ is an eigenpair of $\h$, and by Perron-Frobenius Theorem, we
	know that $\x$ is the signless Laplacian vector of $\h$, so $\x(\max) =
	\sqrt{\frac{\rho}{m}} = \x(\min)$ and therefore $\Gamma(\h) = 1$.
\end{proof}

If we define that a uniform hypergraph is \textit{regular edge}, when for each edge the sum of the degrees of its vertices is constant. It is reasonable to say that the parameter $\Gamma$ measures how far a hypergraph is from being regular edge. Since if $\Gamma$ is greater than 1, then the sum of the vertex degrees of each edge is not constant.

\begin{Exe}
	Suppose $\g$ is a connected graph with $\Gamma = 1$. Let $e = \{u_1, u_2\} \in E$, such that $d(u_1) = d_1$ and $d(u_2) = d_2$. Since the sum of the vertex degrees of each edge is constant, then all neighbors of $u_1$ must have degree $d_2$, as well as, all neighbors of $u_2$ must have degree $d_1$, by the graph's connectivity, all edges must have a vertex with degree $d_1$ and another with degree $d_2$. Therefore $\g$ is regular or bipartite semi-regular.
\end{Exe}

\begin{Def}
	Let $\h=(V,E)$ be a $k$-graph, let $s \geq 1$ and $r \geq ks$ be
	integers. We define the (generalized) \textit{power hypergraph} $\h^r_s$ as
	the $r$-graph with the following sets of vertices and edges
	
	$$V(\h^r_s)=\left( \bigcup_{v\in V} \varsigma_v\right) \cup \left(
	\bigcup_{e\in E} \varsigma_e\right)\;\; \textrm{and}\;\;
	E(\h^r_s)=\{\varsigma_e\cup \varsigma_{v_1} \cup \cdots \cup \varsigma_{v_k}
	\colon e=\{v_1,\ldots, v_k\} \in E\},$$
		
	\noindent where $\varsigma_{v}=\{v_{1}, \ldots,
	v_{s}\}$ for each vertex $v \in V(\h)$ and $\varsigma_e=\{v^1_e,
	\ldots,v^{r-ks}_e\}$ for each edge $e \in E(\h)$.
\end{Def}

Informally, we may say that $\h^r_s$ is obtained from a \textit{base hypergraph}
$\h$, by replacing each vertex $v\in V(\h)$ by a set $\varsigma_v$ of
cardinality $s$, and by adding a set $\varsigma_e$ with $r-ks$ new vertices
to each edge $e \in E(\h)$. For more details about this class, see \cite{Kaue-power}.

\begin{Exe}
	The power hypergraph $(P_4)^5_2$ of the path with four vertices $P_4$ is illustrated in
	Figure \ref{fig:ex1}.
	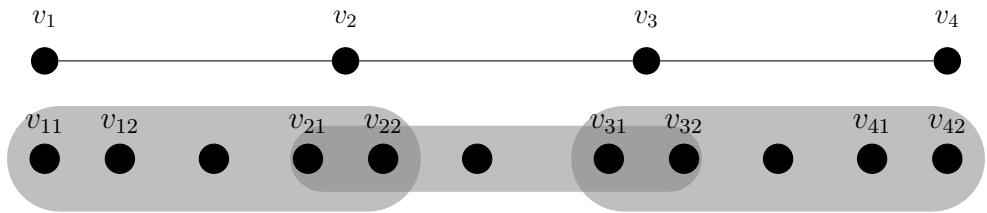
\begin{figure}[h!]
		\centering
		\begin{tikzpicture}
		[scale=1,auto=left,every node/.style={circle,scale=0.9}]
		\node[draw,circle,fill=black,label=below:,label=above:\(v_1\)] (v1) at (0,0) {};
		\node[draw,circle,fill=black,label=below:,label=above:\(v_2\)] (v2) at (4,0) {};
		\node[draw,circle,fill=black,label=below:,label=above:\(v_3\)] (v3) at (8,0) {};
		\node[draw,circle,fill=black,label=below:,label=above:\(v_4\)] (v4) at (12,0) {};
		\path
		(v1) edge node[left]{} (v2)
		(v2) edge node[below]{} (v3)
		(v3) edge node[left]{} (v4);
		\end{tikzpicture}
		
		\begin{tikzpicture}
		\node[draw,circle,fill=black,label=below:,label=above:\(v_{11}\)] (v1) at (0,0) {};
		\node[draw,circle,fill=black,label=below:,label=above:\(v_{12}\)] (v11) at (1,0) {};
		\node[draw,circle,fill=black,label=below:,label=above:] (v22) at (2.25,0) {};
		
		\node[draw,circle,fill=black,label=below:,label=above:\(v_{21}\)] (v3) at (3.5,0) {};
		\node[draw,circle,fill=black,label=below:,label=above:\(v_{22}\)] (v31) at (4.5,0) {};
		\node[draw,circle,fill=black,label=below:,label=above:] (v42) at (5.75,0) {};
		
		\node[draw,circle,fill=black,label=below:,label=above:\(v_{31}\)] (v5) at (7.5,0) {};
		\node[draw,circle,fill=black,label=below:,label=above:\(v_{32}\)] (v51) at (8.5,0) {};
		\node[draw,circle,fill=black,label=below:,label=above:] (v62) at (9.75,0) {};
		
		\node[draw,circle,fill=black,label=below:,label=above:\(v_{41}\)] (v64) at (11,0) {};
		\node[draw,circle,fill=black,label=below:,label=above:\(v_{42}\)] (v7) at (12,0) {};

		\begin{pgfonlayer}{background}
		\draw[edge,color=gray] (v1) -- (v31);
		\draw[edge,color=gray,line width=25pt] (v3) -- (v51);
		\draw[edge,color=gray] (v5) -- (v7);
		\end{pgfonlayer}
		\end{tikzpicture}
		\caption{The power hypergraph $(P_4)^5_2$.}
		\label{fig:ex3}\label{fig:ex1}
	\end{figure}
\end{Exe}

\begin{Teo}
	Let $\h$ be a connected $k$-graph. If $\Gamma(\h) = 1$, then $\Gamma(\h^r_s) = 1.$
\end{Teo}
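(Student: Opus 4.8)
The plan is to reduce the statement to the characterization proved in Theorem \ref{teo:caracG}. By that theorem, the hypothesis $\Gamma(\h) = 1$ means exactly that there is a constant $D$ with $d(v_1) + \cdots + d(v_k) = D$ for every edge $e = \{v_1, \ldots, v_k\} \in E(\h)$ (indeed $D = \rho(\h)$). Since $\h^r_s$ is an $r$-graph, hence uniform, and since it is connected whenever $\h$ is --- a walk in $\h$ lifts to a walk through the cloned vertices of $\h^r_s$, and every added vertex of a set $\varsigma_e$ lies in the unique edge of $\h^r_s$ built from $e$ --- it suffices, again by Theorem \ref{teo:caracG}, to verify that the sum of the degrees of the vertices of each edge of $\h^r_s$ is a constant not depending on the edge.

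First I would describe the degrees of the two kinds of vertices of $\h^r_s$. The edges of $\h^r_s$ are in bijection with those of $\h$: the edge $e = \{v_1, \ldots, v_k\} \in E(\h)$ produces $f_e := \varsigma_e \cup \varsigma_{v_1} \cup \cdots \cup \varsigma_{v_k}$. Consequently, a vertex belonging to $\varsigma_v$ (a clone of $v \in V(\h)$) lies precisely in those $f_e$ with $v \in e$, so its degree in $\h^r_s$ equals $d_\h(v)$; and a vertex belonging to $\varsigma_e$ lies only in $f_e$, so its degree in $\h^r_s$ equals $1$. Since $|\varsigma_v| = s$ for every $v$ and $|\varsigma_e| = r - ks$ for every $e$, I would then compute, for the edge $f_e$ associated with $e = \{v_1, \ldots, v_k\}$,
\[
\sum_{w \in f_e} d_{\h^r_s}(w) = (r - ks)\cdot 1 + \sum_{i=1}^{k} s\, d_\h(v_i) = (r - ks) + s\bigl( d_\h(v_1) + \cdots + d_\h(v_k) \bigr) = (r - ks) + sD,
\]
which is independent of $e$. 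Applying Theorem \ref{teo:caracG} to the connected $r$-graph $\h^r_s$ then gives $\Gamma(\h^r_s) = 1$, as desired.

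This argument is essentially a degree bookkeeping computation, so I do not expect a genuine obstacle; the two points that deserve a sentence of justification are the edge-bijection between $\h$ and $\h^r_s$ (which is what makes the degree of a clone of $v$ equal to $d_\h(v)$) and the connectedness of $\h^r_s$, both of which follow directly from the definition of the power hypergraph.
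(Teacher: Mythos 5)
Your proof is correct and follows essentially the same route as the paper: the paper's proof is the one-line observation that constancy of the edge degree-sums is inherited by $\h^r_s$ (invoking Theorem \ref{teo:caracG} implicitly), and your argument simply fills in the bookkeeping — clone vertices keep degree $d_\h(v)$, added vertices have degree $1$, so each edge sum equals $(r-ks)+sD$ — plus the connectivity check needed to apply the characterization.
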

\begin{proof}
	Just notice that, if the sum of the degree of vertices of each edge of $\h$ is constant, then the sum should remains constant in the edges of $\h^r_s$.
\end{proof}

The bounds of Theorem \ref{teo:qcotas} has already been proved in \cite{Kaue-lap}. However, the technique used did not allow to determine for which class of hypergraphs the equality is achieved. We will present a new proof for the result, determining when equalities hold.

\begin{Teo}\label{teo:qcotas}
	If $\h$ is a connected $k$-graph and $(\rho,\x)$ is its signless Laplacian eigenpair, then
	\[\min_{e\in E}\left\lbrace \sum_{v \in e}d(v)\right\rbrace  \leq \rho(\Q) \leq \max_{e\in E}\left\lbrace \sum_{v \in e}d(v)\right\rbrace.\]
	
	Any equality occurs if, and only if, $\h$ is regular edge.
\end{Teo}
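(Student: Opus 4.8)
The plan is to run the eigenvalue equation $\rho x_u=\sum_{e\in E_{[u]}}\x(e)$ at the vertices of an extremal edge, using that the Perron vector $\x$ is strictly positive. For the upper bound, I would pick an edge $e^{\ast}$ with $\x(e^{\ast})=\x(\max)$ and sum the eigenvalue equation over its vertices:
\[\rho\,\x(\max)=\rho\,\x(e^{\ast})=\sum_{v\in e^{\ast}}\rho x_v=\sum_{v\in e^{\ast}}\ \sum_{f\in E_{[v]}}\x(f)\ \le\ \sum_{v\in e^{\ast}}d(v)\,\x(\max),\]
because for each $v$ the inner sum has $d(v)$ terms, each at most $\x(\max)$. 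Cancelling $\x(\max)>0$ gives $\rho\le\sum_{v\in e^{\ast}}d(v)\le\max_{e\in E}\sum_{v\in e}d(v)$, which is the right-hand inequality. The lower bound is obtained in exactly the same way from an edge $e_{\ast}$ with $\x(e_{\ast})=\x(\min)$, reversing every inequality.

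For the equality discussion, suppose $\rho=\max_{e\in E}\sum_{v\in e}d(v)$ and put $S=\{f\in E:\x(f)=\x(\max)\}$, which is nonempty. If $f\in S$, then repeating the displayed chain with $e^{\ast}$ replaced by $f$ and using $\sum_{v\in f}d(v)\le\rho$ forces every inequality there to be an equality; in particular $\sum_{v\in f}\sum_{g\in E_{[v]}}\big(\x(\max)-\x(g)\big)=0$, a sum of nonnegative terms, so $\x(g)=\x(\max)$ for every edge $g$ meeting $f$, i.e. every edge adjacent to $f$ also lies in $S$. Since $\h$ is connected its line graph is connected — a walk joining a vertex of $f$ to a vertex of an arbitrary edge $g$ yields a chain of pairwise-adjacent edges from $f$ to $g$ — so iterating the previous step gives $S=E$, that is $\Gamma(\h)=1$, and Theorem \ref{teo:caracG} shows $\h$ is regular edge. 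The case $\rho=\min_{e\in E}\sum_{v\in e}d(v)$ is symmetric. Conversely, if $\h$ is regular edge then $\min_{e\in E}\sum_{v\in e}d(v)=\max_{e\in E}\sum_{v\in e}d(v)$, and the two-sided estimate already proved pins $\rho$ to this common value, so both equalities hold.

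The only delicate point is the propagation step in the equality analysis: one must observe that membership $f\in S$ automatically forces $\sum_{v\in f}d(v)=\rho$ (hence equals the maximum), which is precisely what licenses re-running the inequality chain at $f$, and one must carefully pass from connectedness of $\h$ to connectedness of its line graph. Everything else is routine. As an alternative route, the whole statement is the classical fact that the spectral radius of a nonnegative matrix lies between its minimum and maximum row sums, applied to $\B^{T}\B$: its $(e,f)$-entry is $|e\cap f|$, its $e$-th row sum is $\sum_{v\in e}d(v)$, and $\B^{T}\x$ is its (positive) Perron vector with eigenvalue $\rho$, since $\B^{T}\B(\B^{T}\x)=\B^{T}(\rho\x)$ and $\B\B^{T}$, $\B^{T}\B$ share their nonzero spectrum; the standard equality case of that bound for an irreducible nonnegative matrix then gives exactly ``regular edge''.
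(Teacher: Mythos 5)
Your proof is correct and takes essentially the same route as the paper: sum the eigenvalue equation $\rho x_v=\sum_{f\in E_{[v]}}\x(f)$ over an edge attaining $\x(\max)$ (resp.\ $\x(\min)$), cancel the positive factor $\x(\max)$, and in the equality case propagate $\x(g)=\x(\max)$ to all edges via connectivity, concluding regular edge (the paper does exactly this, with your extra care about why the chain can be re-run at each edge of $S$ being a welcome explicit detail). Your closing observation that the statement is also the row-sum bound for $\B^{T}\B$ is a valid alternative viewpoint, but the main argument coincides with the paper's.
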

\begin{proof}
	Let $e=\{v_1,\ldots,v_k\}$ be an edge such that $\x(e) = \x(\max)$, we have that
	
	\begin{eqnarray}\rho (x_{v_1}+\cdots+x_{v_k}) &=& \sum_{\alpha \in E_{[v_1]}}\x(\alpha)+\cdots+\sum_{\alpha \in E_{[v_k]}}\x(\alpha)\notag\\
	\rho \x(\max)&\leq&\sum_{\alpha \in E_{[v_1]}}\x(\max)+\cdots+\sum_{\alpha \in E_{[v_k]}}\x(\max)\notag\\
	\rho \x(\max)&\leq& (d(v_1)+\cdots+d(v_k))\x(\max)\notag\\
	\rho &\leq& d(v_1)+\cdots+d(v_k) \leq \max_{e\in E}\left\lbrace \sum_{v \in e}d(v)\right\rbrace. \notag
	\end{eqnarray}
	Observe that, if $\h$ is regular edge then the equality holds. If the equality occurs, then $\x(\alpha) = \x(\max)$ for all edge in $E_{[v_1]}\cup\cdots\cup E_{[v_k]}$,  by the graph's connectivity we have that $\x(\alpha) = \x(\max)$ for all $\alpha \in E(\h)$, that is $\h$ is regular edge.
	
	Similarly, we obtain the lower bound.
\end{proof}


\bibliographystyle{acm}
\bibliography{Bibliografia}

\end{document}